\newcommand{\N}{\mathbb{N}}
\definecolor{Red}{rgb}{1,0,0}
\theoremstyle{plain}
\newtheorem{theorem}{Theorem}[section]
\newtheorem*{repthm}{Theorem}
\newtheorem*{repprop}{Proposition}
\newtheorem{proposition}[theorem]{Proposition}
\theoremstyle{definition}
\newtheorem{definition}[theorem]{Definition}
\theoremstyle{remark}
\newtheorem{remark}[theorem]{Remark}
\newtheorem{note}[theorem]{Note}
\numberwithin{equation}{section}
\numberwithin{figure}{section}
\title{ Systolic embedding of graphs on translation  surfaces}
\author{Achintya Dey}
\address{
Department of Mathematics and Statistics\\ 
Indian Institute of Technology  \\ 
Kanpur, Uttar Pradesh-208016\\
India}
\email{achintd@iitk.ac.in}
\author{Bidyut Sanki}
\address{
Department of Mathematics and Statistics\\ 
Indian Institute of Technology  \\ 
Kanpur, Uttar Pradesh-208016\\
India}
\email{bidyut@iitk.ac.in}
\date{\today}
\begin{document}
\subjclass {Mathematics Subject Classification (2020): 30F10, 32G15 and 53C22} 
\keywords{Translation surface, Saddle connection, Systolic connection, Systolic graph embedding}
\begin{abstract}
An embedding of a graph on a translation surface is said to be \emph{systolic} if each vertex of the graph corresponds to a singular point (or marked point) and each edge corresponds to a shortest saddle connection on the translation surface. The embedding is said to be \emph{cellular} (respectively \emph{essential}) if each complementary region is a topological disk (respectively not a topological disk). In this article, we prove that any finite graph admits an essential-systolic embedding on a translation surface and estimate the genera of such surfaces. For a wedge $\Sigma_n$ of $n$ circles, $n\geq2$, we investigate that $\Sigma_n$ admits cellular-systolic embedding on a translation surface and compute the minimum and maximum genera of such surfaces. Finally, we have identified another rich collection of graphs with more than one vertex that also admit cellular-sytolic embedding on translation surfaces.
\end{abstract}
\maketitle {}
\section{Introduction}
Embedding of graphs on surfaces has emerged as an active area of research in recent times. An embedding of a graph $G$ on a closed surface $S$ is called cellular (respectively, essential) if every component of the complement $S\setminus G$ is an open topological disk (respectively, not a topological disk). Embedding of graphs on closed hyperbolic surfaces has been studied by Sanki in~\cite{sa19} and Gadgil-Sanki in~\cite{san19}. In \cite{sa19}, Sanki has shown that any metric graph (up to scaling its metric) admits an essential and isometric embedding on a closed hyperbolic surface. Furthermore, he has determined the minimum genus of surfaces for such embeddings. In this paper, we study the systolic embedding of graphs on translation surfaces.

  A \emph{translation surface} is obtained from a finite union of Euclidean polygons by identifying parallel sides of opposite orientation via translations (see Definition \ref{Translation Surface}). The vertices of the polygons, after identification, give rise to cone points or singular points on a translation surface. Since the sides of the polygons are glued by Euclidean translations, the total angle at a point corresponding to the equivalence class of a vertex of a polygon is $2k\pi$, for some $k\in\mathbb N$. If $k>1$, then we call it a \emph{singular point}, and if $k=1$, the point is called a \emph{marked point}. Now, a \emph{saddle connection} on a translation surface is a geodesic segment between two singular points (which may not be distinct) such that it does not contain singular points in its interior. A \emph{shortest saddle connection} is a saddle connection of the shortest length. Note that for a translation surface without singular points, we use marked point in place of singular point to define these notions.
  
Let $S$ be a translation surface and $G$ be a graph on $S$. We say $G$ admits a \emph{systolic embedding} on $S$ if the vertices of $G$ are the singular points (or marked points) and the edges of $G$ are the shortest saddle connections on $S$.
Judge-Parlier~\cite{Pa19} have computed the maximum number of noncontractible loops of minimum length (systole) on a genus two translation surface. 
Boissy-Geninska~\cite{Bo21} have shown that the length of the shortest saddle connection of a translation surface of genus $g$ and having $r$ number of singular points (or marked points) is at most $\left[\frac{\sqrt{3}}{2}\left(2g-2+r\right)\right]^{-\frac{1}{2}}$ and the maximum number of shortest saddle connections is $3(2g-2+r)$. Later, Columbus-Herrlich-Muetzel-Schmithüsen~\cite{MR4790974} have studied the saddle connections graph of a translation surface and showed how the systoles of a translation surface and the systoles of its saddle connections graph are related.
 
Our article is mainly oriented towards the following problems:

\begin{enumerate}
      \item Given a finite graph $G$, does there exist a translation surface $S$ such that $G$ admits a systolic embedding on $S$?
    \item If such an embedding exists, then what is the minimum (and maximum) possible genus of a surface $S$ on which $G$ admits systolic embedding?
\end{enumerate}

We study two types of systolic embedding of graphs on translation surfaces: (i) essential and (ii) cellular. In the context of essential-systolic embedding, we establish that any finite graph $G$ admits essential-systolic embedding on some translation surface. In particular, we prove the theorem in the following.

\begin{theorem}\label{main theorem 1}
For every odd integer $n\geq 5$, the complete graph $K_n$ admits an essential-systolic embedding on a translation surface $S_{g_n}$ of the genus $g_n$, where $$g_n = 1 - n \left( 1 - \frac{n-1}{2} \right).$$
\end{theorem}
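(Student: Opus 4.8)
The plan is to construct the translation surface explicitly from Euclidean polygons so that the complete graph $K_n$ appears with its vertices as singular (or marked) points and its edges as shortest saddle connections, and so that the embedding is essential. First I would verify the genus formula is self-consistent: the claimed value $g_n = 1 - n\left(1 - \frac{n-1}{2}\right)$ must come from an Euler-characteristic count. If the surface $S$ carries $K_n$ cellularly or essentially, then with $V = n$ vertices and $E = \binom{n}{2} = \frac{n(n-1)}{2}$ edges, the relation $2 - 2g = V - E + F$ determines the number $F$ of complementary regions once $g$ is fixed. I would reverse this: rewrite $g_n = 1 - n + \frac{n(n-1)}{2} = 1 - V + E$, which forces $F = 0$ in a naive count — signalling that the surface is \emph{not} built as a single cellular complex but rather as a disjoint-type assembly where the complementary regions are tori or higher-genus pieces (consistent with the embedding being \emph{essential}, i.e.\ complementary regions are not disks). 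So the first real step is to pin down what translation-surface data realises exactly this $V$, $E$, and the prescribed essential complement.

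\medskip
Next I would give the explicit polygonal model. The natural approach for the complete graph, and the reason $n$ is taken odd, is to exploit a cyclic symmetry: place $n$ marked points and realise each of the $\binom{n}{2}$ edges as a parallel family of saddle connections of one common length $\ell$ (the systole). Concretely, for each of the $\frac{n-1}{2}$ ``chord directions'' available in a regular $n$-gon arrangement, I would build a flat cylinder or a band of parallelograms whose core saddle connections all have length exactly $\ell$, and glue these bands so that at each marked point the cone angle is a multiple of $2\pi$. Oddness of $n$ is what makes every chord length realisable by a single direction class and makes the $n$ rotational copies fit together without forcing shorter diagonals; this is precisely where even $n$ would break the construction (antipodal chords would coincide or create shorter connections). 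I would then check two things carefully: (a) every constructed saddle connection has length $\ell$, and (b) no saddle connection shorter than $\ell$ is accidentally created — these together certify that the $\binom{n}{2}$ chosen segments are exactly the shortest saddle connections, hence the systolic condition holds and the edge set is precisely $E(K_n)$.

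\medskip
With the surface in hand, I would compute its genus directly from the gluing data via the angle-excess (Gauss--Bonnet / Euler) count: summing the cone angles at the identified vertices and the number of polygon sides gives $2 - 2g$, and I would confirm this yields $g_n = 1 - n + \frac{n(n-1)}{2}$. Finally I would verify the complementary regions are not topological disks, establishing that the embedding is essential rather than cellular; this typically follows because the bands close up into cylinders whose complement contains annular or higher-genus pieces rather than disks.

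\medskip
The main obstacle I anticipate is step (b): ruling out the creation of \emph{shorter} saddle connections. Getting $K_n$'s edges to all have equal length is the easy, symmetric part; the delicate part is certifying minimality, since a flat surface assembled from many parallelograms can easily develop short diagonal geodesics between marked points that were not intended as edges. I expect the proof to control this by a careful choice of the parallelogram side ratios (making the ``transverse'' dimension large enough that no diagonal beats $\ell$) together with the rotational symmetry, which reduces the verification to finitely many representative directions. Handling this cleanly, and simultaneously ensuring the cone-angle condition $2k\pi$ at every vertex, will be the crux of the argument.
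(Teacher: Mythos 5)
Your outline has the right overall shape --- an explicit polygonal construction, a division of the edge set into $\frac{n-1}{2}$ classes tied to the oddness of $n$, edges realised as equal-length shortest saddle connections, and a genus computation via the Euler characteristic --- but it stops short of being a proof exactly at the point you yourself flag as the crux. You never specify the polygons, the gluing pattern, or the mechanism that simultaneously (i) joins every pair of the $n$ singular points by a saddle connection of the common length $\ell$, (ii) guarantees that no shorter saddle connection appears, and (iii) keeps every cone angle a multiple of $2\pi$. ``Build a flat cylinder or a band of parallelograms for each chord direction and glue these bands'' is not yet a construction, and your explanation of why even $n$ fails (``antipodal chords would coincide'') is not the actual role of parity. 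The paper's device is concrete: take one large regular $(n-1)$-gon $R_i$ of side length $s$ for each vertex $v_i$, attach a thin $s\times\delta$ rectangle with $\delta<\frac{s}{2}$ to each of its $n-1$ sides, self-identify the two short sides of each rectangle, and glue the outer long sides of the rectangles between different $R_i$'s according to Walecki's decomposition of $K_n$ into $\frac{n-1}{2}$ edge-disjoint Hamiltonian cycles --- a decomposition that exists precisely because $n$ is odd, which is where parity genuinely enters. Each such gluing produces a saddle connection of length $2\delta<s$ running across the two identified rectangles, one for each edge of $K_n$, while any other saddle connection must traverse a large polygon or the full length of a rectangle and so cannot be shorter; this is the certification of minimality that you correctly identify as the delicate step but leave unresolved.

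A smaller point: your Euler-characteristic sanity check contains an arithmetic slip. From $g_n = 1 - V + E$ and $V - E + F = 2 - 2g_n$ one gets $F = V - E$, which is negative for $n\ge 4$, not $F=0$; the correct reading is simply that $K_n$ admits no cellular embedding on a surface of this genus, consistent with the embedding being essential. The genus itself must be computed from the honest cell structure of the glued polygons (faces, identified sides, and identified vertices of the $R_i$'s and rectangles), not from a face count for the graph $K_n$.
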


This result is established by explicitly building a translation surface \( S_{g_n} \) that admits such an embedding of \( K_n \). Furthermore, we establish the following proposition.

\begin{proposition}\label{proposition 1}
For odd $n$ with $n\geq 5$, every subgraph of $K_n$ admits essential-systolic embedding on a translation surface of genus $g_n$.

\end{proposition}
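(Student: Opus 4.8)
The plan is to build on Theorem~\ref{main theorem 1} by reusing the explicit translation surface $S_{g_n}$ constructed there. Let $H$ be an arbitrary subgraph of $K_n$ with $n$ odd and $n \geq 5$. Since $K_n$ admits an essential-systolic embedding on $S_{g_n}$, every vertex of $K_n$ is a singular (or marked) point of $S_{g_n}$ and every edge is realized by a shortest saddle connection of the same fixed length, say $\ell$. The key structural observation is that $H$ is obtained from $K_n$ simply by deleting some edges (and possibly isolated vertices): the vertex set of $H$ is a subset of the singular points, and each edge of $H$ is already one of the shortest saddle connections present on $S_{g_n}$. Thus the embedding of $H$ is just the restriction of the $K_n$-embedding, and I would first verify that this restriction is still a systolic embedding, i.e. that each retained edge remains a \emph{shortest} saddle connection on $S_{g_n}$. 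This is immediate because deleting edges of $H$ from the picture does not alter the surface $S_{g_n}$ itself or its metric; the shortest saddle connection length is an invariant of $S_{g_n}$, so every saddle connection that was shortest for $K_n$ remains shortest, and the edges of $H$ are a subcollection of these.

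Next I would address why the embedding of $H$ remains \emph{essential} rather than cellular. The subtle point is that removing edges from an embedded graph can only enlarge (or merge) the complementary regions: if $F$ is a complementary region of the $K_n$-embedding and we delete an edge on its boundary, the region $F$ fuses with the adjacent region across that edge, producing a strictly larger region. Since the $K_n$-embedding is essential, none of its complementary regions is a topological disk; I would argue that merging non-disk regions, or enlarging a non-disk region, cannot produce a disk. More carefully, the complement $S_{g_n} \setminus H$ is obtained from $S_{g_n} \setminus K_n$ by adding back the open edges that were deleted, which only connects components and increases topological complexity (genus or number of boundary components) of the affected regions. A region containing positive genus or more than one boundary circle can never be a disk. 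Hence every complementary region of $H$ remains non-disk, so the embedding is essential.

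The main obstacle I anticipate is handling the \emph{isolated vertices} and ensuring the embedding of $H$ is still an embedding at all in the required sense, namely that after deleting edges the remaining vertices are still genuine singular or marked points and no vertex becomes extraneous. In particular, if $H$ omits all edges incident to some vertex $v$, then $v$ sits on $S_{g_n}$ as an isolated embedded point; I must confirm that the definition of systolic embedding tolerates isolated vertices (it does, since the condition is only that vertices are singular/marked points and edges are shortest saddle connections, with no requirement that every singular point carry an edge). I would also check the degenerate cases where $H$ has very few edges, so that $S_{g_n} \setminus H$ has a single complementary region; one must confirm this single region is not a disk, which follows since $S_{g_n}$ has genus $g_n \geq 1$ and removing a graph from a positive-genus surface cannot leave a single disk unless the graph is cellularly embedded with the right Euler-characteristic count, which the sparse $H$ does not satisfy.

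Finally, I would package these observations into a short formal argument: restrict the given embedding, invoke the invariance of the shortest-saddle-connection length to retain the systolic property, and use the monotonic behavior of complementary regions under edge deletion to retain essentiality. The proof should be brief precisely because all the hard geometric work — constructing a surface on which $K_n$'s edges are simultaneously shortest saddle connections — was already carried out in Theorem~\ref{main theorem 1}, and subgraphs inherit this structure for free.
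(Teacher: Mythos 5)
Your proposal rests on a reading of ``systolic embedding'' that the paper does not use. The operative definition is that $G$ admits a systolic embedding on $X$ if $G\cong\Gamma_X$, where $\Gamma_X$ is the \emph{full} systolic graph: its vertex set is all singular (or marked) points and its edge set is \emph{all} shortest saddle connections of $X$. Merely restricting the $K_n$-embedding to a proper subgraph $H$ does not change the surface $S_{g_n}$, so $\Gamma_{S_{g_n}}$ is still $K_n$; the saddle connections corresponding to the deleted edges remain shortest, hence remain edges of $\Gamma_{S_{g_n}}$, and $H\not\cong\Gamma_{S_{g_n}}$. Your observation that ``deleting edges of $H$ from the picture does not alter the surface or its metric'' is exactly the reason the restriction fails to be systolic, not the reason it succeeds. (Your parenthetical claim that the definition ``tolerates isolated vertices'' and imposes ``no requirement that every singular point carry an edge'' is likewise incompatible with $G\cong\Gamma_X$.)

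The paper's proof instead \emph{modifies the surface}: if the edge $e$ to be removed is realized by the concatenation $b_{ij}*b_{ml}'$ of two rectangle widths, one enlarges the widths $b_{ij},b_{ij}'$ of the rectangle $Q_{ij}$ so that this concatenation becomes strictly longer than the remaining ones (while all lengths stay controlled so no new shortest saddle connections appear). The resulting surface $S_{g_n}^{*}$ has the same genus and combinatorial structure, but its systolic graph is exactly $K_n\setminus\{e\}$; iterating over the edges to be deleted handles an arbitrary subgraph. This perturbation step is the entire content of the proposition and is absent from your argument. Your discussion of essentiality (merging non-disk complementary regions cannot create a disk) is sound as far as it goes, but it addresses the easy half of the problem; as written, the proposal does not prove the statement.
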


Since any finite graph \( G \) is a subgraph of a complete graph $K_n$ for some odd $n\ge5$, Theorem~\ref{main theorem 1} and Proposition~\ref{proposition 1} together yield the following main result.

\begin{theorem} \label{Main Result}
Every finite graph admits an essential-systolic embedding on a translation surface. 
\end{theorem}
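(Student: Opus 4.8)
The plan is to obtain the statement as a direct corollary of Proposition~\ref{proposition 1}, by realizing an arbitrary finite graph as a subgraph of a sufficiently large complete graph of odd order. First I would fix a finite graph $G$ and let $m$ denote the number of its vertices. I would then choose $n$ to be any odd integer with $n \geq \max\{m,5\}$ and identify the vertex set of $G$ with a subset of the $n$ vertices of $K_n$. Since $K_n$ contains every possible edge among these $n$ vertices, each edge of $G$ is in particular an edge of $K_n$; hence $G$ is isomorphic to a subgraph of $K_n$. (Here, as in the authors' own reduction, ``finite graph'' is understood to be a finite simple graph, which is exactly what the subgraph relation with $K_n$ requires.)

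With this containment established, the second step is a one-line appeal to Proposition~\ref{proposition 1}: because $n$ is odd and $n \geq 5$, every subgraph of $K_n$, and in particular $G$, admits an essential-systolic embedding on a translation surface of genus $g_n = 1 - n\left(1 - \frac{n-1}{2}\right)$. This furnishes the required translation surface and closes the argument; Theorem~\ref{main theorem 1}, which underlies Proposition~\ref{proposition 1}, is thus invoked only implicitly through the latter. The one point to verify explicitly is that the admissible orders, namely odd integers at least $5$, are cofinal in $\mathbb{N}$, so that the constraint $n \geq m$ can always be satisfied no matter how many vertices $G$ has; this is immediate, since for any $m$ one may take $n$ to be whichever of $m$ or $m+1$ is odd, increased to $5$ if necessary. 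I would also note that the construction yields no information beyond the explicit value $g_n$ and in particular does not produce a surface of minimal genus.

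The genuine difficulty in this line of reasoning lies not in the present deduction but upstream, in Proposition~\ref{proposition 1}: passing from a systolic embedding of the full graph $K_n$ to a systolic embedding of an \emph{arbitrary} subgraph is delicate, because deleting edges can alter which saddle connections realize the shortest length and can change the topology of the complementary regions (so that the essential condition must be re-checked). Once that proposition is granted, however, the Main Result follows at once from the elementary observation that every finite graph sits inside some $K_n$ with $n$ odd and $n \geq 5$.
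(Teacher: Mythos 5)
Your argument is correct and is essentially identical to the paper's own proof: both reduce the statement to Proposition~\ref{proposition 1} by embedding the given graph $G$ as a subgraph of $K_n$ for a suitable odd $n\geq 5$. The additional remarks you make (cofinality of admissible odd orders, and the observation that the real work is hidden in Proposition~\ref{proposition 1}) are accurate but do not change the route.
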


If a graph admits an essential-systolic embedding on a translation surface of genus $g$, then there exists an integer $g'>g$ such that the graph admits such an embedding on a translation surface of genus $g'$. Therefore, the maximum-genus problem of such surfaces does not make sense, and hence we study the minimum-genus problem. We define $g^{\min}_G$ as the minimum possible genus of the surfaces on which the graph $G$ admits an essential-systolic embedding. To address Question~(2), we have established the following result.

\begin{theorem} \label{upper bound}
For a graph \( G \) with \( n \) vertices, an upper bound on the minimum genus of a translation surface that admits such an embedding is given by
\[
g^{\min}_G \leq 
\begin{cases}
1 - n\left(1 - \frac{n - 1}{2}\right), & \text{if } n \text{ is odd and } n \geq 5, \\[0.5em]
1 - (n + 1)\left(1 - \frac{n}{2}\right), & \text{if } n \text{ is even and } n \geq 5, \\[0.5em]
6, & \text{if } n < 5.
\end{cases}
\]
\end{theorem}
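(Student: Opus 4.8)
The plan is to deduce the theorem directly from Proposition~\ref{proposition 1} by realizing $G$ as a subgraph of a suitable complete graph. The key observation is that a graph $G$ on $n$ vertices is a subgraph of $K_m$ for every $m \geq n$, while the hypothesis of Proposition~\ref{proposition 1} only requires $m$ to be odd and $m \geq 5$. I would therefore take $m$ to be the smallest odd integer with $m \geq \max\{n,5\}$, so that $G \subseteq K_m$ and Proposition~\ref{proposition 1} produces an essential-systolic embedding of $G$ on a translation surface of genus $g_m$, whence $g^{\min}_G \leq g_m$. Since the genus function $g_m = 1 - m\left(1 - \frac{m-1}{2}\right)$ is increasing for $m \geq 5$, choosing the smallest admissible $m$ gives the sharpest bound obtainable by this method.

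First I would split into the three cases matching the statement. If $n$ is odd and $n \geq 5$, then $m = n$ is admissible, $G \subseteq K_n$, and Proposition~\ref{proposition 1} yields $g^{\min}_G \leq g_n = 1 - n\left(1 - \frac{n-1}{2}\right)$. If $n$ is even and $n \geq 5$, hence $n \geq 6$, then $n$ itself is not odd, so the smallest admissible value is $m = n+1 \geq 7$; here $G \subseteq K_{n+1}$ and substituting $m = n+1$ into the genus formula gives $g^{\min}_G \leq g_{n+1} = 1 - (n+1)\left(1 - \frac{n}{2}\right)$, exactly the claimed bound. If $n < 5$, then $G \subseteq K_4 \subseteq K_5$, so $m = 5$ is the smallest admissible odd value, and a direct evaluation gives $g^{\min}_G \leq g_5 = 1 - 5\left(1 - \frac{4}{2}\right) = 6$.

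The one point that genuinely requires care is that $G$ may have strictly fewer vertices than $K_m$; this always occurs in the even and small-$n$ cases, and may occur whenever $G$ is not spanning. This is precisely why Proposition~\ref{proposition 1} is phrased for \emph{every} subgraph of $K_m$ rather than only for spanning ones, so that its proof must already accommodate a subgraph whose vertex set is a proper subset of the singular (or marked) points arising from the embedding of $K_m$. Granting Proposition~\ref{proposition 1} in this full generality, the remaining work is the routine case analysis above together with the three evaluations of $g_m$. Consequently I expect no serious obstacle here: the substantive content lives entirely in Theorem~\ref{main theorem 1} and Proposition~\ref{proposition 1}, and the present statement merely packages those results into a single uniform bound, the only subtlety being to pass from $n$ to the nearest admissible odd complete graph according to the parity and size of $n$.
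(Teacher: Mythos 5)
Your proposal is correct and follows essentially the same route as the paper: both embed $G$ into $K_n$, $K_{n+1}$, or $K_5$ according to the parity and size of $n$, and then invoke Proposition~\ref{proposition 1} together with the genus formula from Theorem~\ref{main theorem 1}. Your added remarks on monotonicity of $g_m$ and on non-spanning subgraphs are accurate but not needed beyond what the paper already does.
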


In contrast to essential embedding, not all graphs admit cellular-systolic embedding on translation surfaces. For example, the complete graph $K_3$ admits cellular embedding only on the sphere, which is not a translation surface. Thus, in response to Question (1), it is natural to ask whether there exists a collection of graphs that can be embedded in this way. In this context, we have identified a family of graphs $\Sigma_n$, which denotes the wedge of $n$ circles for $n \geq 2$. We establish the following results, which jointly answer questions (1) and (2) in this setting.

\begin{theorem} \label{cellular embedding}
Let $\Sigma_n$ be the wedge of $n$ circles, where $n \geq 2$. Then we have the following:
\begin{enumerate}
    \item $\Sigma_n$ admits cellular-systolic embedding on a translation surface of genus $\left\lfloor \frac{n}{2} \right\rfloor$. Furthermore, this is the maximum genus for which such an embedding is possible.
    
    \item The minimum genus of a translation surface on which $\Sigma_n$ admits cellular-systolic embedding is $\left\lceil \frac{n+3}{6} \right\rceil$.
    
    \item For any integer $g$ satisfying $\left\lceil \frac{n+3}{6} \right\rceil \leq g \leq \left\lfloor \frac{n}{2} \right\rfloor$, the graph $\Sigma_n$ admits  cellular-systolic embedding on a translation surface of the genus $g$.
\end{enumerate}
\end{theorem}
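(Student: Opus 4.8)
My plan is to pin down the genus between two combinatorial bounds and then to realise every genus in the resulting range by an explicit flat construction. Write $V,E,F$ for the numbers of vertices, edges and complementary faces of an embedding of $\Sigma_n$, so that $V=1$ and $E=n$. For the upper bound on the genus I would use the Euler characteristic: a cellular embedding satisfies $V-E+F=2-2g$ with every face a disk, whence $F=1+n-2g\geq 1$ and therefore $g\leq \lfloor n/2\rfloor$. For the lower bound I would invoke the Boissy--Geninska estimate quoted in the introduction~\cite{Bo21}: a translation surface of genus $g$ with a single singular (or marked) point carries at most $3(2g-1)$ shortest saddle connections. A systolic embedding of $\Sigma_n$ forces its $n$ loops to be exactly the shortest saddle connections, so $n\leq 3(2g-1)$, that is, $g\geq \lceil (n+3)/6\rceil$. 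These two inequalities already give the ``maximum'' half of (1) and the ``minimum'' half of (2); what remains is to construct surfaces attaining each admissible genus.

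For the maximum genus $g=\lfloor n/2\rfloor$ I would realise the complement as essentially a single polygon. When $n=2g$ is even, the regular $2n$-gon with opposite sides identified by translations yields a translation surface of genus $g$ with one singular point, whose $2n$ sides are identified in opposite pairs into $n$ saddle connections bounding a single disk; checking that these sides are the shortest saddle connections then gives a cellular-systolic embedding with $F=1$. When $n=2g+1$ is odd I would use a suitable variant of the $4g$-gon surface (for instance a sheared polygon, or one with an extra triangular piece glued on) so as to introduce one further loop of the common systolic length, splitting the single face into the two faces predicted by $F=1+n-2g=2$.

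For the minimum genus $g=\lceil (n+3)/6\rceil$, and more generally for every intermediate genus, I would work with flat surfaces tiled by congruent equilateral triangles whose corners are all glued to one cone point; here every edge has the systolic length, and a count gives $3F=2E$, hence $F=2n/3$ and $g=(n+3)/6$ in the extremal case $n=3(2g-1)$. Starting from such a maximally triangulated surface of genus $g$, which realises $n=6g-3$ systolic loops, I would then decrease the number of shortest saddle connections one at a time by a controlled shear or stretch of the flat structure, an operation that fixes the genus and keeps a single vertex while lengthening selected saddle connections out of the systolic set. Running this deformation down to $n=2g$ would produce, for every pair $(n,g)$ with $\lceil (n+3)/6\rceil\leq g\leq \lfloor n/2\rfloor$, a translation surface on which $\Sigma_n$ embeds cellularly and systolically, which is precisely assertion (3) and simultaneously completes (1) and (2).

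The main obstacle I anticipate is the systolic verification. In each model I must prove that the chosen $n$ loops are \emph{exactly} the shortest saddle connections, with no shorter or equal-length competitor sneaking in, including diagonals of the polygons and saddle connections in irrational directions that wrap through the gluings; and for part (3) I must show that the count is changed by exactly one, and cellularity preserved, at each step of the deformation, since merging two disk faces across the removed loop must again yield a disk. Establishing these sharp length comparisons on the regular-polygon and equilateral-triangle flat structures is where the real work lies, whereas the Euler-characteristic and Boissy--Geninska bounds are the easy, purely combinatorial input.
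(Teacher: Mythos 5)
Your two bounds are exactly the paper's: the Euler-characteristic count $F=1+n-2g\geq 1$ for the maximum and the Boissy--Geninska inequality $n\leq 3(2g-1)$ for the minimum, and your regular $2n$-gon for even $n$ at the top genus is also the paper's construction (there $n=2m$ and the polygon is a $4m$-gon). For odd $n$ at the top genus the paper does something concrete that you leave open: it takes a regular $(2m+1)$-gon $P$ and its reflection $P'$ across one side and glues $s_i$ to $s_{2m+1-i}'$, giving one cone point and two complementary disks; your ``sheared polygon or extra triangular piece'' is not yet a construction. The real divergence is in parts (2) and (3), where the paper builds, for each admissible pair $(n,g)$, an explicit decomposition into $F=n-2g+1$ faces consisting of equilateral triangles together with one or two larger regular polygons (all side lengths equal, so every identified side is systolic), whereas you propose to start from a maximal equilateral triangulation with $6g-3$ systoles and \emph{deform} the flat structure to expel systoles one at a time.

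That deformation step is a genuine gap, and I do not think it can work as stated. In a surface tiled by unit equilateral triangles the edges fall into only three direction classes, and any global $\mathrm{GL}(2,\mathbb{R})$ shear or stretch rescales lengths class by class, so it removes systoles in blocks of roughly $(6g-3)/3$, not one at a time. A local modification fares no better: if you try to lengthen a single edge $e$ shared by triangles $T_1,T_2$ while keeping every other edge a unit vector (as you must, since those edges are glued to unchanged triangles), the closing condition that the three side vectors of $T_1$ sum to zero determines $e$ from the other two sides, so $e$ cannot move. To lose one systole at a time you are forced to change the \emph{combinatorics} of the polygonal decomposition, not just the flat metric --- which is precisely what the paper does by trading triangles for a single larger regular polygon (e.g.\ $2k$ triangles plus one regular $(2n-6k)$-gon when $n-2g+1=2k+1$, and the analogous even-face configuration). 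You should also not take for granted the existence of the starting object, a genus-$g$ one-vertex equilateral triangulation realising all $6g-3$ systoles; the paper exhibits it explicitly (the $\Sigma_9$ surface at genus $2$, extended by attaching $4(g-2)$ triangles for higher genus). Your instinct that the sharp length comparisons are where the work lies is right, but in the paper those comparisons are trivialised by making every face a regular polygon of common side length; the missing ingredient in your plan is the combinatorial family of decompositions, not a deformation.
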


In addition, we identify another collection of graphs, each with more than one vertex, that can also be embedded on a translation surface in this manner.


\section{Graph Embeddings on Translation Surfaces}
\begin{definition}\label{Translation Surface} (Masur~\cite{Masur06})
A \emph{translation surface} is a finite union of Euclidean polygons $\{ \triangle_1, \triangle_2, \dots,\triangle_n \}$, equipped with a side pairing,
 such that:
\begin{enumerate}
    \item The boundary of every polygon is oriented so that the interior of the polygon lies to the left.
    \item For every $1\leq j\leq n$ and every oriented side $s_j$ of $\triangle_j$, there exists $k\in\{1,2,\cdots,n\}$ and an oriented side $s_k$ of $\triangle_k$ such that $s_j$ and $s_k$ are parallel and of the same length. They are glued together in the opposite orientation by a parallel translation.
\end{enumerate}
\end{definition}

By the definition of translation surfaces, the vertices of the Euclidean polygons used in their construction give rise to cone points or singular points. Since the sides of the polygons are identified by translations, the total angle around each cone point is a multiple of $2\pi$.

\begin{definition}
Let $X$ be a translation surface. A point on $X$ which represents the equivalence class of a vertex of a polygon is classified as follows:
\begin{enumerate}
    \item If the total angle around that point is greater than $2\pi$, it is called a \emph{singular point}.
    \item If the total angle is exactly $2\pi$, the point is called a \emph{marked point}.
\end{enumerate}
\end{definition}

For example, in Figure~\ref{Figure Torus}, the translation surface on the left (the square torus) has a marked with angle $2\pi$. The surface on the right is obtained by identifying parallel sides (with opposite orientation) of three unit squares. All vertices are identified to a single point with total angle $6\pi$, resulting in a singular point.

\begin{figure}[htbp]
\begin{center}
\begin{tikzpicture}[scale=1.5]
\draw (0,0)--(1,0)--(1,1)--(0,1)--(0,0);
\draw[red] (0,0) to (1,0);
\draw[red] ((1,1) to (0,1);
\draw[green] (0,0) to (0,1) ;
\draw[green] (1,0) to (1,1);
\draw (0,0) node{\tiny$\bullet$};
\draw (1,0) node{\tiny$\bullet$};
\draw (0,1) node{\tiny$\bullet$};
\draw (1,1) node{\tiny$\bullet$};
\draw [-{Stealth[red]}]
(0.55,0)--(0.56,0);
\draw [-{Stealth[red]}]
(0.55,1)--(0.56,1);
\draw [-{Stealth[green]}]
(0,0.55)--(0,0.56);
\draw [-{Stealth[green]}]
(1,0.55)--(1,0.56);
\draw (0.5,-0.2) node{$a_1$};
\draw (0.5,1.2) node {$a_1$};
\draw (-0.2, 0.5) node{$b_1$};
\draw (1.2, 0.5) node{$b_1$};

\draw (3,0)--(5,0)--(5,1)--(4,1)--(4,2)--(3,2)--(3,0);
\draw[red] (4,2) to (3,2);
\draw[red] (3,0) to (4,0);
\draw[green] (3,2) to (3,1);
\draw[green] (4,2) to (4,1);
\draw[blue] (3,1) to (3,0);
\draw[blue] (5,0) to (5,1);
\draw[dashed] (3,1) to (4,1);
\draw[dashed] (4,1) to (4,0);
\draw (3,0) node{\tiny$\bullet$};
\draw (4,0) node{\tiny$\bullet$};
\draw (5,0) node{\tiny$\bullet$};
\draw (5,1) node{\tiny$\bullet$};
\draw (4,1) node{\tiny$\bullet$};
\draw (3,1) node{\tiny$\bullet$};
\draw (3,2) node{\tiny$\bullet$};
\draw (4,2) node{\tiny$\bullet$};
\draw [-{Stealth[red]}]
(3.5,0)--(3.51,0);
\draw [-{Stealth[red]}]
(3.5,2)--(3.51,2);
\draw (3.5,-0.15) node{a};
\draw (3.5,2.15) node{a};
\draw [-{Stealth[blue]}]
(3,0.5)--(3,0.51);
\draw [-{Stealth[blue]}]
(3,0.5)--(3,0.51);
\draw [-{Stealth[blue]}]
(5,0.5)--(5,0.51);
\draw (2.85,0.5) node{b};
\draw (5.15,0.5) node{b};
\draw [-{Stealth[green]}]
(3,1.5)--(3,1.51);
\draw [-{Stealth[green]}]
(4,1.5)--(4,1.51);
\draw (2.85,1.5) node{c};
\draw (4.15,1.51) node{c};
\draw [-{Stealth}]
(4.5,0)--(4.51,0);
\draw [-{Stealth}]
(4.5,1)--(4.51,1);
\draw (4.5,-0.15) node{d};
\draw (4.5,1.15) node{d};

\end{tikzpicture}
\end{center}
\caption{Translation Surfaces with Marked and Singular Points }
\label{Figure Torus}
\end{figure}
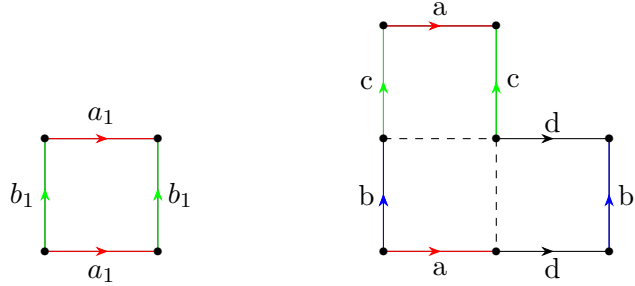

\begin{definition}
A \emph{saddle connection} on a translation surface is a geodesic segment connecting two singular points (which may coincide) with no singular points in its interior.
\end{definition}

A shortest saddle connection is a minimal-length saddle connection among all saddle connections on the surface. In this paper, we refer shortest saddle connection as \emph{systolic connection}.
\begin{note}
   When a translation surface has no singular points, a marked point replaces a singular point in these definitions.
\end{note}
In Figure~\ref{Figure Torus}, the segments $a_1$, $b_1$, and $a$, $b$, $c$, $d,$ are all saddle connections and are also examples of the systolic connections. For further details, we refer to~\cite{Mas22},~\cite{AR15} and~\cite{Masur06}.

\begin{definition}
The \emph{systolic graph} $\Gamma_X$, corresponding to a translation surface $X$, is a graph whose vertices are singular points (or marked points), and the edges are the systolic connections of $X$. 
\end{definition}

\begin{definition}
Let $X$ be a translation surface and $G$ be a graph.
\begin{enumerate}
\item  $G$ admits a \emph{cellular embedding} on $X$ if every component of $X\setminus G$ is an open topological disk.
\item G admits an \emph{essential embedding} on $X$ if none of the components of $X\setminus G$ is a topological disk.
\item  $G$ admits a \emph{systolic embedding} on $X$ if $G\cong \Gamma_X$.
\end{enumerate}
\end{definition}
\section{Essential-Systolic Embedding}
In this section, our goal is to prove Theorem~\ref{Main Result}. In particular, we prove that every graph admits an essential-systolic embedding on a translation surface. We begin with the proofs of Theorem~\ref{main theorem 1} and Proposition~\ref{proposition 1}, which are essential for the proof of Theorem~\ref{Main Result}.

\begin{repthm}[\ref{main theorem 1}]
    Every complete graph $K_n$, with $n$ odd and $n \geq 5$, admits an essential-systolic embedding on a translation surface $S_{g_n}$, where the genus $g_n$ is given by $$ g_n = 1 - n \left( 1 - \frac{n-1}{2} \right).$$
\end{repthm}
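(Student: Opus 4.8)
The plan is to construct $S_{g_n}$ explicitly from a proper edge-coloring of $K_n$ and then read off each required property directly from the resulting flat structure. The first ingredient is the parity of $n$. Since $n$ is odd, the chromatic index of $K_n$ equals $n$; concretely, coloring the edge $\{i,j\}$ by $i+j \pmod n$ on the vertex set $\mathbb{Z}_n$ partitions $E(K_n)$ into $n$ color classes $M_0,\dots,M_{n-1}$, each a matching of $\tfrac{n-1}{2}$ edges, so that every vertex lies in exactly $n-1$ of them (it misses the single color $2i \bmod n$, which is well defined as $2$ is invertible mod $n$). This near-$1$-factorization is the combinatorial backbone: at each vertex exactly one edge of each present color emanates, so the $n-1$ edges at a vertex carry $n-1$ distinct directions.

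Next I would realize this data geometrically. I fix $n$ pairwise non-parallel directions $u_0,\dots,u_{n-1}$ in $\mathbb{R}^2$ and prescribe every edge of color $c$ to be a segment of a common length $L$ in direction $u_c$. I then assemble Euclidean polygons whose oriented sides are exactly these segments, arranged around each vertex in the cyclic order determined by the $u_c$, and glue each side to its unique same-color (hence parallel, oppositely oriented) partner by a translation. By Definition~\ref{Translation Surface} this yields a translation surface $S$ carrying $K_n$ with its vertices at the cone points and all its edges realized as saddle connections of equal length $L$.

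It then remains to identify the genus and to establish systolicity and essentiality. For the genus I would compute the cone angle at each vertex: the construction is arranged so that the total angle at every vertex is $2\pi(n-2)$, i.e.\ each vertex is a zero of order $n-3$; summing over the $n$ vertices gives $\sum_i m_i = n(n-3) = 2g-2$, whence $g = 1-n+\tfrac{n(n-1)}{2} = g_n$ (equivalently, one may run Euler's formula on the polygonal decomposition). For systolicity one must show that the $\binom{n}{2}$ chosen edges, all of length $L$, are \emph{precisely} the shortest saddle connections, so that $K_n \cong \Gamma_S$. Essentiality then follows cheaply: a cellular embedding on a genus-$g_n$ surface would force $F = n - \binom{n}{2} < 0$ complementary regions, which is impossible, so the embedding cannot be cellular; one argues instead that every complementary region carries positive genus or extra boundary and hence is not a disk.

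I expect the decisive difficulty to be the metric estimate in the systolic step: one must rule out every saddle connection of length $< L$ — in particular short "diagonals" cutting across the assembled polygons — and this is exactly what pins down the admissible choices of the directions $u_c$, the common length $L$, and the precise shapes of the polygons. Arranging the cone angle to equal $2\pi(n-2)$ simultaneously with this minimality is the heart of the construction; once both are secured, the genus computation and the essentiality argument above complete the proof.
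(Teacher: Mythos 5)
Your approach has a structural flaw that no choice of directions $u_c$, length $L$, or polygon shapes can repair. You assemble ``Euclidean polygons whose oriented sides are exactly these segments'' and glue sides in pairs; that makes the embedded copy of $K_n$ the full $1$--skeleton of a polygonal decomposition of $S$, so every complementary region is an open polygon, i.e.\ the embedding is \emph{cellular}. But a cellular embedding of $K_n$ can never have genus $g_n$: Euler's formula gives $F=2-2g_n-n+\binom{n}{2}=n-\binom{n}{2}<0$, which you in fact compute yourself in your essentiality paragraph without noticing that it refutes your own construction rather than establishing essentiality. The same contradiction appears in your angle count: summing interior angles over all faces gives total cone angle $\bigl(2E-2F\bigr)\pi=\bigl(n(n-1)-2F\bigr)\pi$, and demanding cone angle $2\pi(n-2)$ at each of the $n$ vertices forces $2F=n(3-n)<0$. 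So the surface you describe does not exist; to reach genus $g_n$ the flat structure must contain sides that are \emph{not} edges of $K_n$, i.e.\ the embedding has to be essential by design, not as an afterthought. Separately, the two steps you flag as remaining --- that the chosen segments are \emph{precisely} the shortest saddle connections, and that every complementary component (not merely ``the embedding is not cellular'') fails to be a disk --- are exactly the content of the theorem and are not carried out.

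The paper's construction is built to avoid precisely this obstruction. Each vertex $v_i$ is fattened into a large regular $(n-1)$-gon $R_i$ of side length $s$, with a thin rectangle $Q_{ij}$ ($\ell(b_{ij})<s/2$) attached to each side; the rectangles are paired across different $P_i$'s along their long sides $a_{ij}$, the pairing being organized by Walecki's decomposition of $K_n$ into $\frac{n-1}{2}$ edge-disjoint Hamiltonian cycles (this plays the role your near-$1$-factorization was meant to play). An edge of $K_n$ is then realized as the short concatenation $b_{ij}*b'_{i'j'}$ of length $2\ell(b)<s$, while every other side has length $s$, so systolicity follows from the length hierarchy rather than from a delicate estimate; the cone angle at each singular point is $2\pi(n-2)$, giving $g_n$ by Gauss--Bonnet exactly as in your computation; and the complement of $K_n$ contains all the $(n-1)$-gons $R_i$ and has negative Euler characteristic, hence is not a union of disks. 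If you want to salvage your edge-coloring idea, you would need to supplement each vertex with such a ``buffer'' polygon so that the graph edges are short transversals of the gluing rather than sides of the defining polygons.
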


\begin{proof}
We prove the theorem by explicitly constructing a translation surface of the given genus on which the graph $K_n$ admits an essential-systolic embedding. We denote the vertices of $K_n$ by $v_0, v_1, \dots, v_{n-1}$.

To construct the desired translation surface, we begin by taking $n$ many regular Euclidean $(n-1)$-gons $R_0$, $R_1$, $\dots$, $R_{n-1}$, each with fixed area $a$ and side length $s$. Next, we introduce $n(n-1)$ rectangles $Q_{ij}$ (see Figure~\ref{Qij}), with sides labeled $a_{ij}, b_{ij}, a_{ij}^\prime, b_{ij}^\prime$, satisfying the condition:
\begin{equation}\label{eq:1}
    \ell(a_{ij}) = \ell(a_{ij}^\prime) = s \quad \text{and} \quad \ell(b_{ij}) = \ell(b_{ij}^\prime) < \frac{s}{2}, \quad \text{for } 0 \leq i \leq n-1,\ 1 \leq j \leq n-1.
\end{equation}

\begin{figure}[htbp]
\begin{center}
\begin{tikzpicture}[]
\draw (0,0)--(2,0);
\draw[red] (2,0)--(2,1);
\draw (2,1)--(0,1);
\draw[red] (0,1)--(0,0);

\draw (1, 1.2) node{$a_{ij}$};
\draw (1,-0.3) node {$a^\prime_{ij}$};
\draw (-0.3,0.5) node{$b_{ij}$};
\draw (2.3,0.5) node {$b^\prime_{ij}$};
\end{tikzpicture}
\end{center}
\caption{$Q_{ij}$ }
\label{Qij}
\end{figure}

We attach each rectangle $Q_{ij}$ to the polygon $R_i$ along the side $a_{ij}^\prime$ by a translation. Let $P_0, P_1, \dots, P_{n-1}$ be the resulting figures composed of $R_i$ and the corresponding rectangles. In each $P_i$, we identify the sides $b_{ij}$ with $b_{ij}^\prime$ by translation.

To complete the construction of the translation surface, we now glue the remaining sides $a_{ij}$ as described below:

We obtain ${\frac{n-1}{2}}$ edge-disjoint Hamiltonian cycles $C_l$ in $K_n$ using Walecki's construction (see~\cite{MR4089838} Lemma 18):
$$C_l= \left(v_0, v_l, v_{l+1}, v_{l-1}, v_{l+2},\ v_{l-2},\dots,  v_{l+\frac{n-1}{2}}\right), \text{ for }1 \leq l \leq \frac{n-1}{2},$$ where subscripts of $v$'s are taken modulo $n-1$. Now, consider the following permutations in $S_n$ corresponding to the above cycles: $$\sigma_l^\prime = \left(0,l, l+1, l-1, l+2, l-2, \dots, l+\frac{n-1}{2} \right) ~mod~(n-1) \text{ for }1 \leq l \leq \frac{n-1}{2}.$$
 For each  $P_i$, re-label the sides $a_{ij}$ as $E_1(i), E_2(i), \dots, E_{\frac{n-1}{2}}(i),\bar{E}_1(i), \bar{E}_2(i), \dots, \bar{E}_{\frac{n-1}{2}}(i)$ in a fixed order such that each \( E_r(i) \) is parallel to \( \bar{E}_r(i) \). Moreover, for any \( 0 \leq i, i' \leq n-1 \) and \( 1 \leq r \leq \frac{n-1}{2} \), we have $E_r(i)$ is parallel to $\bar{E}_r(i')$. We glue the sides $E_r(i)$ with $\bar{E}_r(\sigma_r^\prime(i))$, for $i = 0, 1, \dots, n-1$ and $r = 1, 2, \dots, \frac{n-1}{2}$. This process yields a translation surface $S_{g_n}$, with $n$ singular points, each having total angle $2\pi(n - 2)$. If $\sigma_r^\prime(i) = i'$, then the side $E_r(i) = a_{ij}$ is glued to $\bar{E}_r(i') = a_{i'j'}$ for some $j, j' \in \{1, 2, \dots, n-1\}$. Thus, the concatenation $b_{ij} * b_{i'j'}^\prime$ forms a saddle connection of $S_{g_n}$. From condition~\eqref{eq:1}, the length of each such saddle connection is less than $s$, making it the systolic connection. For instance, we explicitly construct a translation surface for \( K_5 \) (Figure~\ref{figure18}) by the method described above, using the permutations \( \sigma'_1 = (0\ 1\ 2\ 4\ 3) \) and \( \sigma'_2 = (0\ 2\ 3\ 1\ 4) \). The resulting surface is illustrated in Figure~\ref{figure19}.

Therefore, all these $n(n-1)$ saddle connections are systolic connections, and each singular point connects to $(n-1)$ others. The systolic graph $\Gamma_{S_{g_n}}$ is a $(n-1)$-regular graph with $n$ vertices $V_0, V_1,\dots, V_{n-1}$. Now, we map $V_i$ to $v_i$ for each $0\le i\le n-1$ and hence the above construction ensures that $\Gamma_{S_{g_n}}\cong K_n$. Since $\Gamma_{S_{g_n}} \cong K_n$, and the components of $S_{g_n} \setminus K_n$ are not open topological disks, it follows that $K_n$ admits an essential-systolic embedding on $S_{g_n}$.

Finally, using the Euler characteristic formula, we obtain: $$g_n = \left[1 - \left(1 - \frac{n-1}{2} \right)n\right].$$

\end{proof}

\begin{figure}[htbp]
\begin{center}
\begin{tikzpicture}[scale=2]
\draw (-0.587,-0.809)--(0.587,-0.809);
\draw(0.587,-0.809)--(0.951,0.309);
\draw (0.951,0.309)--(0,1);
\draw (0,1)--(-0.951,0.309);
\draw (-0.951,0.309)--(-0.587,-0.809);

\draw[red] (-0.587,-0.809) node{\tiny$\bullet$};
\draw (-0.75,-0.809) node{$v_0$};
\draw[blue] (0.587,-0.809) node{\tiny$\bullet$};
\draw (0.75,-0.809) node{$v_1$};
\draw[green] (0.951,0.309) node{\tiny$\bullet$};
\draw[] (1.1,0.309) node{$v_2$};
\draw[yellow] (0,1) node{\tiny$\bullet$};
\draw[] (0,1.1) node{$v_3$};
\draw (-0.951,0.309) node{\tiny$\bullet$};
\draw (-1.1,0.309) node{$v_4$};

\draw (-0.587,-0.809)--(0.951,0.309);
\draw (-0.587,-0.809)--(0,1);
\draw (0.951,0.309)--(-0.951,0.309)--(0.587,-0.809)--(0,1);

\end{tikzpicture}
\caption{$K_5$}
\label{figure18}
\end{center}
\end{figure}

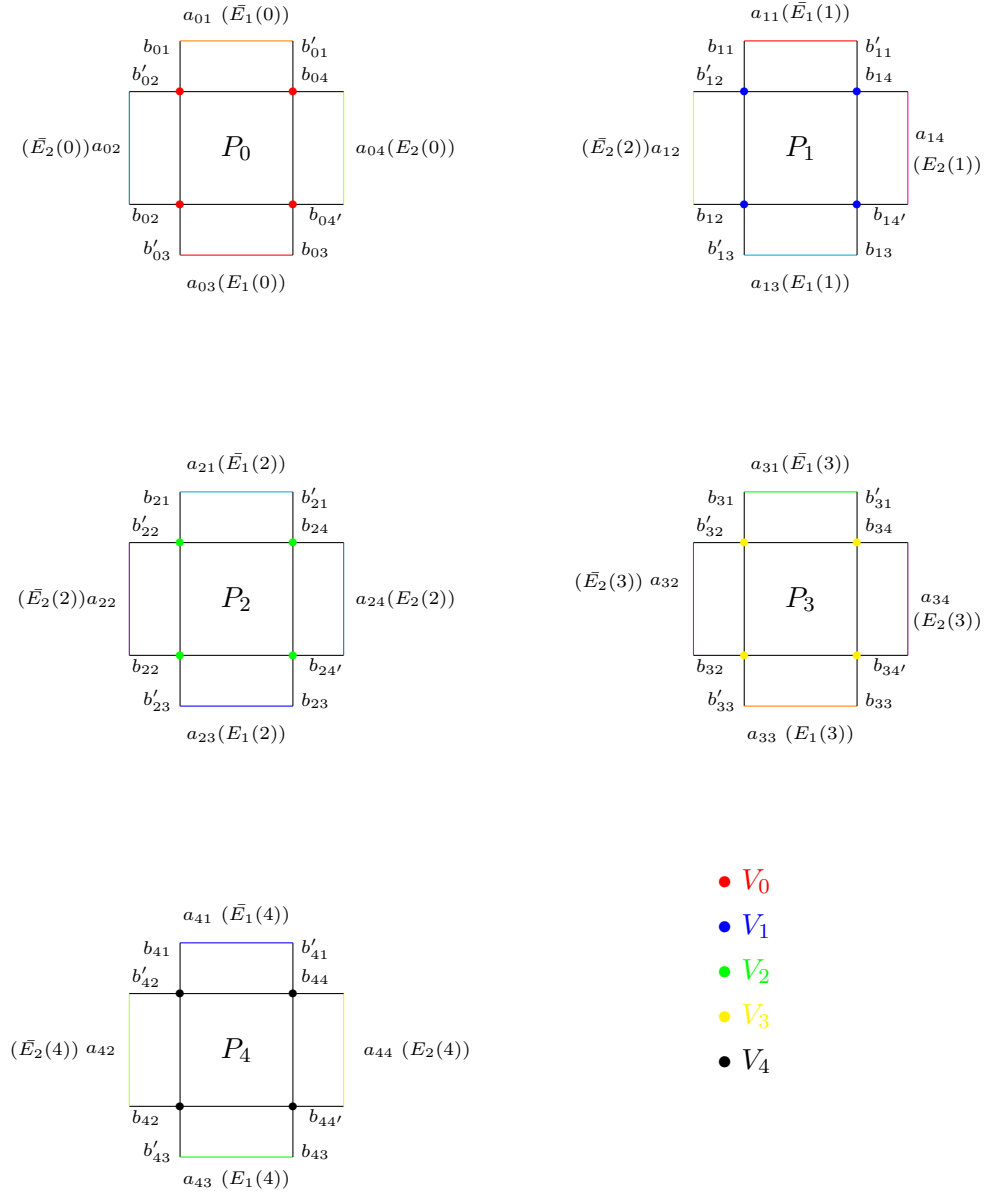
\begin{figure}[htbp]
\begin{center}
\begin{tikzpicture}[scale=1.5]
\draw (0,0)--(1,0)--(1,1)--(0,1)--(0,0);
\draw (0,0)--(0,-0.45);
\draw[red] (0,-0.45)--(1,-0.45);
\draw (1,-0.45)--(1,0);
\draw (1,0)--(1.45,0);
\draw[lime] (1.45,0)--(1.45,1);
\draw (1.45,1)--(1,1);
\draw (0,1)--(0,1.45);
\draw[orange] (0,1.45)--(1,1.45);
\draw (1,1.45)--(1,1);
\draw (0,0)--(-0.45,0);
\draw[teal] (-0.45,0)--(-0.45,1);
\draw (-0.45,1)--(0,1);

\draw (-0.2,1.4) node{\tiny$b_{01}$};
\draw (1.2,1.4) node {\tiny$b^\prime_{01}$};
\draw (-0.3,-0.1) node{\tiny$b_{02}$};
\draw (-0.3,1.15) node{\tiny$b_{02}^\prime$};
\draw (1.2,1.15) node{\tiny$b_{04}$};
\draw (1.3,-0.1) node{\tiny$b_{04^\prime}$};
\draw (-0.2,-.4) node{\tiny$b_{03}^\prime$};
\draw (1.2,-.4) node{\tiny$b_{03}$};

\draw (0.5,1.68) node{\tiny$a_{01}$ $(\bar{E_1}(0))$};
\draw (-0.65,0.5) node{\tiny$a_{02}$};
\draw (-1.1,0.5) node {\tiny$(\bar{E_2}(0))$};
\draw (0.5,-0.7) node{\tiny$a_{03}  (E_1(0))$};
\draw (2,0.5) node{\tiny$a_{04} (E_2(0))$};
\draw (0.5,0.5)node{$P_0$};
\draw[red] (0,0) node{\tiny$\bullet$};
\draw[red] (1,0) node{\tiny$\bullet$};
\draw[red] (1,1) node{\tiny$\bullet$};
\draw[red] (0,1) node{\tiny$\bullet$};

\draw (5,0)--(6,0)--(6,1)--(5,1)--(5,0);
\draw (5,0)--(5,-0.45);
\draw[cyan] (5,-0.45)--(6,-0.45);
\draw (6,-0.45)--(6,0);
\draw (6,0)--(6.45,0);
\draw[magenta] (6.45,0)--(6.45,1);
\draw (6.45,1)--(6,1);
\draw (5,1)--(5,1.45);
\draw[red] (5,1.45)--(6,1.45);
\draw (6,1.45)--(6,1);
\draw (4.8,1.4) node{\tiny$b_{11}$};
\draw (6.2,1.4) node {\tiny$b^\prime_{11}$};
\draw (5,0)--(4.55,0);
\draw[yellow] (4.55,0)--(4.55,1);
\draw(4.55,1)--(5,1);
\draw (4.7,-0.1) node{\tiny$b_{12}$};
\draw (4.7,1.15) node{\tiny$b_{12}^\prime$};
\draw (6.2,1.15) node{\tiny$b_{14}$};
\draw (6.3,-0.1) node{\tiny$b_{14^\prime}$};
\draw (4.8,-.4) node{\tiny$b_{13}^\prime$};
\draw (6.2,-.4) node{\tiny$b_{13}$};

\draw (5.5,1.7) node{\tiny$a_{11} (\bar{E_1}(1))$};
\draw (4,0.5) node{\tiny$(\bar{E_2}(2)) a_{12}$};

\draw (5.5,-0.7) node{\tiny$a_{13} (E_1(1))$};
\draw (6.65,0.6) node{\tiny$a_{14}$};
\draw (6.8,0.35) node{\tiny$ (E_2(1))$};
\draw (5.5,0.5)node{$P_1$};
\draw[blue] (5,0) node{\tiny$\bullet$};
\draw[blue] (6,0) node{\tiny$\bullet$};
\draw[blue] (6,1) node{\tiny$\bullet$};
\draw[blue] (5,1) node{\tiny$\bullet$};

\draw (0,-4)--(1,-4)--(1,-3)--(0,-3)--(0,-4);
\draw (0,-4)--(0,-4.45);
\draw[blue] (0,-4.45)--(1,-4.45);
\draw(1,-4.45)--(1,-4);
\draw (1,-4)--(1.45,-4);
\draw[teal] (1.45,-4)--(1.45,-3);
\draw (1.45,-3)--(1,-3);
\draw (0,-3)--(0,-2.55);
\draw[cyan] (0,-2.55)--(1,-2.55);
\draw(1,-2.55)--(1,-3);
\draw (0,-4)--(-0.45,-4);
\draw[violet] (-0.45,-4)--(-0.45,-3);
\draw[] (-0.45,-3)--(0,-3);
\draw (-0.2,-2.6) node{\tiny$b_{21}$};
\draw (1.2,-2.6) node {\tiny$b^\prime_{21}$};
\draw (-0.3,-4.1) node{\tiny$b_{22}$};
\draw (-0.3,-2.85) node{\tiny$b_{22}^\prime$};
\draw (1.2,-2.85) node{\tiny$b_{24}$};
\draw (1.3,-4.1) node{\tiny$b_{24^\prime}$};
\draw (-0.2,-4.4) node{\tiny$b_{23}^\prime$};
\draw (1.2,-4.4) node{\tiny$b_{23}$};

\draw (0.5,-2.3) node{\tiny$a_{21} (\bar{E_1}(2))$};
\draw (-1,-3.5) node{\tiny$(\bar{E_2}(2)) a_{22} $};
\draw (0.5,-4.7) node{\tiny$a_{23} (E_1(2))$};
\draw (2,-3.5) node{\tiny$a_{24}(E_2(2))$};
\draw (0.5,-3.5)node{$P_2$};
\draw[green] (0,-4) node{\tiny$\bullet$};
\draw[green] (1,-4) node{\tiny$\bullet$};
\draw[green] (1,-3) node{\tiny$\bullet$};
\draw[green] (0,-3) node{\tiny$\bullet$};

\draw (5,-4)--(6,-4)--(6,-3)--(5,-3)--(5,-4);
\draw (5,-4)--(5,-4.45);
\draw[orange] (5,-4.45)--(6,-4.45);
\draw(6,-4.45)--(6,-4);
\draw (6,-4)--(6.45,-4);
\draw[violet] (6.45,-4)--(6.45,-3);
\draw (6.45,-3)--(6,-3);
\draw (5,-3)--(5,-2.55);
\draw[green] (5,-2.55)--(6,-2.55);
\draw(6,-2.55)--(6,-3);
\draw (5,-4)--(4.55,-4);
\draw[magenta] (4.55,-4)--(4.55,-3);
\draw(4.55,-3)--(5,-3);
\draw (4.8,-2.6) node{\tiny$b_{31}$};
\draw (6.2,-2.6) node {\tiny$b^\prime_{31}$};
\draw (4.7,-4.1) node{\tiny$b_{32}$};
\draw (4.7,-2.85) node{\tiny$b_{32}^\prime$};
\draw (6.2,-2.85) node{\tiny$b_{34}$};
\draw (6.3,-4.1) node{\tiny$b_{34^\prime}$};
\draw (4.8,-4.4) node{\tiny$b_{33}^\prime$};
\draw (6.2,-4.4) node{\tiny$b_{33}$};

\draw (5.5,-2.3) node{\tiny$a_{31} (\bar{E_1}(3))$};
\draw (4.3,-3.35) node{\tiny$a_{32}$};
\draw (3.8,-3.35) node{\tiny$(\bar{E_2}(3))$};
\draw (5.5,-4.7) node{\tiny$a_{33}$ $ (E_1(3))$};
\draw (6.7,-3.5) node{\tiny$a_{34}$};
\draw (6.8,-3.7) node{\tiny$(E_2(3))$};

\draw (5.5,-3.5)node{$P_3$};
\draw[yellow] (5,-4) node{\tiny$\bullet$};
\draw[yellow] (6,-4) node{\tiny$\bullet$};
\draw[yellow] (6,-3) node{\tiny$\bullet$};
\draw[yellow] (5,-3) node{\tiny$\bullet$};

\draw (0,-8)--(1,-8)--(1,-7)--(0,-7)--(0,-8);
\draw (0,-8)--(0,-8.45);
\draw[green] (0,-8.45)--(1,-8.45);
\draw (1,-8.45)--(1,-8);
\draw (1,-8)--(1.45,-8);
\draw[yellow] (1.45,-8)--(1.45,-7);
\draw (1.45,-7)--(1,-7);
\draw (0,-7)--(0,-6.55);
\draw[blue] (0,-6.55)--(1,-6.55);
\draw (1,-6.55)--(1,-7);
\draw (0,-8)--(-0.45,-8);
\draw[lime] (-0.45,-8)--(-0.45,-7);
\draw (-0.45,-7)--(0,-7);
\draw (-0.2,-6.6) node{\tiny$b_{41}$};
\draw (1.2,-6.6) node {\tiny$b^\prime_{41}$};
\draw (-0.3,-8.1) node{\tiny$b_{42}$};
\draw (-0.3,-6.85) node{\tiny$b_{42}^\prime$};
\draw (1.2,-6.85) node{\tiny$b_{44}$};
\draw (1.3,-8.1) node{\tiny$b_{44^\prime}$};
\draw (-0.2,-8.4) node{\tiny$b_{43}^\prime$};
\draw (1.2,-8.4) node{\tiny$b_{43}$};

\draw (0.5,-6.3) node{\tiny$a_{41}$ $(\bar{E_1}(4))$};
\draw (-0.7,-7.5) node{\tiny$a_{42}$};
\draw (-1.2,-7.5) node {\tiny$(\bar{E_2}(4))$};
\draw (0.5,-8.65) node{\tiny$a_{43}$ $ (E_1(4))$};
\draw (2.1,-7.5) node{\tiny$a_{44}$ $(E_2(4))$};
\draw (0.5,-7.5)node{$P_4$};
\draw[] (0,-8) node{\tiny$\bullet$};
\draw[] (1,-8) node{\tiny$\bullet$};
\draw[] (1,-7) node{\tiny$\bullet$};
\draw[] (0,-7) node{\tiny$\bullet$};

\draw[red] (5,-6) node{$\bullet$ $V_0$};
\draw[blue] (5,-6.4) node{$\bullet$ $V_1$};
\draw[green] (5,-6.8) node{$\bullet$ $V_2$};
\draw[yellow] (5,-7.2) node{$\bullet$ $V_3$};
\draw (5,-7.6) node{$\bullet$ $V_4$};
\end{tikzpicture}
\caption{Translation surface for $K_5$}
\label{figure19}
\end{center}
\end{figure}

Now, we are ready to prove the proposition \ref{proposition 1}.

\begin{repprop}[\ref{proposition 1}]
  For odd $n$ with $n\geq 5$, every subgraph of $K_n$ admits essential-systolic embedding on a translation surface of genus $g_n$.
\end{repprop}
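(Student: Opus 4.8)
The plan is to reuse, verbatim at the level of combinatorics, the translation surface constructed in the proof of Theorem~\ref{main theorem 1}, changing only the heights of the rectangles. Fix an odd $n\geq 5$ and a subgraph $H\subseteq K_n$, which I regard as carrying the full vertex set $\{v_0,\dots,v_{n-1}\}$, so that isolated vertices are allowed. Recall that the $a$-side gluings pair the $n(n-1)$ rectangles into $\binom{n}{2}$ pairs, one pair $\{Q_{ij},Q_{i'j'}\}$ for each edge $\{v_i,v_{i'}\}$ of $K_n$, and that the associated saddle connection $b_{ij}*b_{i'j'}^\prime$ has length $\ell(b_{ij})+\ell(b_{i'j'}^\prime)$. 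Crucially, inside each $P_i$ only the two vertical sides of a single rectangle are identified, so the height of every rectangle may be prescribed independently without disturbing the side-pairing combinatorics.

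First I would assign heights. Fix $0<b<B<\frac{s}{2}$, and to both rectangles of each pair give height $b$ if the corresponding edge lies in $H$ and height $B$ otherwise; this is well defined because each rectangle belongs to exactly one pair. Since the identifications are untouched, the result is again a translation surface with the same $n$ singular points and the same cell structure, so its genus is still $g_n$, delivered by the same Euler-characteristic computation as in Theorem~\ref{main theorem 1}.

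Next I would pin down the systolic connections. The proof of Theorem~\ref{main theorem 1} shows that on the uniform-height surface the $n(n-1)$ edge-connections of length $2b$ are the shortest saddle connections, so every other saddle connection is strictly longer than $2b$ (the polygon sides alone already have length $s$). Passing to the mixed heights turns the edge-connections of $H$ into length-$2b$ arcs and those of the edges of $K_n$ not in $H$ into length-$2B$ arcs, while leaving fixed every connection that does not run vertically through an enlarged rectangle; the lengths that do change only increase. Consequently the systole is still $2b$, attained exactly by the edges of $H$, and $V_i\mapsto v_i$ gives $\Gamma_{S_{g_n}}\cong H$; an isolated vertex of $H$ corresponds to a polygon all of whose rectangles were enlarged, so its singular point meets no systolic connection.

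Finally, essentiality is inherited from the $K_n$ embedding. Topologically $K_n$ is still embedded on this surface, since all edge-connections persist as arcs, and its complementary regions are the same non-disks as before. As $H$ is obtained from $K_n$ by deleting edges, and deleting an edge replaces its two adjacent faces $f_1,f_2$ by a single region of Euler characteristic $\chi(f_1)+\chi(f_2)-1$ (or by a single region of characteristic $\chi(f_1)-1$ when both sides of the edge lie in one face), each complementary region of $H$ has Euler characteristic at most $0$ and hence is not a disk. Thus $H$ admits an essential-systolic embedding on a genus-$g_n$ translation surface. The step demanding the most care is the length bookkeeping in the third paragraph: one must confirm that enlarging rectangles neither shortens an existing saddle connection nor produces a new one of length $\le 2b$, i.e. that the combinatorial types of saddle connections are unchanged and their lengths are monotone in the rectangle heights — this is where I expect the main work to lie.
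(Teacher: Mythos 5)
Your proposal is correct and follows essentially the same route as the paper: the paper likewise keeps the $K_n$-surface of Theorem~\ref{main theorem 1} and enlarges the height of the rectangle carrying a deleted edge so that the corresponding concatenation $b_{ij}*b_{i'j'}^\prime$ is no longer of minimal length, leaving the side-pairing and the genus untouched. The only differences are presentational: the paper removes one edge at a time and is terser about the systole bookkeeping, essentiality, and isolated vertices, all of which you spell out.
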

\begin{proof}
Let $e$ be an edge of $K_n$. We show that the graph $K_n \setminus \{e\}$ admits a essential-systolic embedding on a translation surface of the same genus $g_n$.

Consider the embedding of $K_n$ on $S_{g_n}$ as given in Theorem~\ref{main theorem 1}. Assume that the edge $e$ is formed by the concatenation $b_{ij} * b_{ml}^\prime$, for some fixed indices $i, j, m, l$, with $\ell(b_{ij}) = \ell(b_{ml}^\prime) = \ell_0.$ Now, we modify the rectangle $Q_{ij}$ in such a way that
$
\ell(b_{ij}) = \ell(b_{ij}^\prime) > \ell_0$. This ensures that the concatenation $b_{ij} * b_{ml}^\prime$ is no longer a systolic connection.

By performing this modification on $Q_{ij}$ and keeping the rest of the construction unchanged, we obtain a new translation surface $S_{g_n}^*$, of the genus $g_n$, on which $K_n \setminus \{e\}$ is essentially and systolically embedded.

Now it follows that any subgraph $H \subseteq K_n$ admits essential-systolic embedding on a translation surface of the genus $g_n$.
\end{proof}
In Figure~\ref{figure21}, we show the construction for $K_5\setminus\{e_{v_0v_1}\}$ (Figure~\ref{figure20}). 
\begin{figure}[htbp]
\begin{center}
\begin{tikzpicture}[scale=2]

\draw(0.587,-0.809)--(0.951,0.309);
\draw (0.951,0.309)--(0,1);
\draw (0,1)--(-0.951,0.309);
\draw (-0.951,0.309)--(-0.587,-0.809);

\draw[red] (-0.587,-0.809) node{\tiny$\bullet$};
\draw (-0.75,-0.809) node{$v_0$};
\draw[blue] (0.587,-0.809) node{\tiny$\bullet$};
\draw (0.75,-0.809) node{$v_1$};
\draw[green] (0.951,0.309) node{\tiny$\bullet$};
\draw[] (1.1,0.309) node{$v_2$};
\draw[yellow] (0,1) node{\tiny$\bullet$};
\draw[] (0,1.1) node{$v_3$};
\draw (-0.951,0.309) node{\tiny$\bullet$};
\draw (-1.1,0.309) node{$v_4$};

\draw (-0.587,-0.809)--(0.951,0.309);
\draw (-0.587,-0.809)--(0,1);
\draw (0.951,0.309)--(-0.951,0.309)--(0.587,-0.809)--(0,1);

\end{tikzpicture}
\caption{$K_5\setminus \{e_{v_0 v_1}\}$}
\label{figure20}
\end{center}
\end{figure}

\begin{figure}[htbp]
\begin{center}
\begin{tikzpicture}[scale=1.5]
\draw (0,0)--(1,0)--(1,1)--(0,1)--(0,0);
\draw (0,0)--(0,-0.8);
\draw[red] (0,-0.8)--(1,-0.8);
\draw (1,-0.8)--(1,0);

\draw (1,0)--(1.45,0);
\draw[lime] (1.45,0)--(1.45,1);
\draw (1.45,1)--(1,1);
\draw (0,1)--(0,1.45);
\draw[orange] (0,1.45)--(1,1.45);
\draw (1,1.45)--(1,1);
\draw (0,0)--(-0.45,0);
\draw[teal] (-0.45,0)--(-0.45,1);
\draw (-0.45,1)--(0,1);

\draw (-0.2,1.4) node{\tiny$b_{01}$};
\draw (1.2,1.4) node {\tiny$b^\prime_{01}$};
\draw (-0.3,-0.1) node{\tiny$b_{02}$};
\draw (-0.3,1.15) node{\tiny$b_{02}^\prime$};
\draw (1.2,1.15) node{\tiny$b_{04}$};
\draw (1.3,-0.1) node{\tiny$b_{04^\prime}$};
\draw (-0.2,-0.6) node{\tiny$b_{03}^\prime$};
\draw (1.2,-0.6) node{\tiny$b_{03}$};

\draw (0.5,1.68) node{\tiny$a_{01}$ $(\bar{E_1}(0))$};
\draw (-0.65,0.5) node{\tiny$a_{02}$};
\draw (-1.1,0.5) node {\tiny$(\bar{E_2}(0))$};
\draw (0.5,-1) node{\tiny$a_{03}  (E_1(0))$};
\draw (2,0.5) node{\tiny$a_{04} (E_2(0))$};
\draw (0.5,0.5)node{$P_0$};
\draw[red] (0,0) node{\tiny$\bullet$};
\draw[red] (1,0) node{\tiny$\bullet$};
\draw[red] (1,1) node{\tiny$\bullet$};
\draw[red] (0,1) node{\tiny$\bullet$};

\draw (5,0)--(6,0)--(6,1)--(5,1)--(5,0);
\draw (5,0)--(5,-0.45);
\draw[cyan] (5,-0.45)--(6,-0.45);
\draw (6,-0.45)--(6,0);
\draw (6,0)--(6.45,0);
\draw[magenta] (6.45,0)--(6.45,1);
\draw (6.45,1)--(6,1);
\draw (5,1)--(5,1.45);
\draw[red] (5,1.45)--(6,1.45);
\draw (6,1.45)--(6,1);
\draw (4.8,1.4) node{\tiny$b_{11}$};
\draw (6.2,1.4) node {\tiny$b^\prime_{11}$};
\draw (5,0)--(4.55,0);
\draw[yellow] (4.55,0)--(4.55,1);
\draw(4.55,1)--(5,1);
\draw (4.7,-0.1) node{\tiny$b_{12}$};
\draw (4.7,1.15) node{\tiny$b_{12}^\prime$};
\draw (6.2,1.15) node{\tiny$b_{14}$};
\draw (6.3,-0.1) node{\tiny$b_{14^\prime}$};
\draw (4.8,-.4) node{\tiny$b_{13}^\prime$};
\draw (6.2,-.4) node{\tiny$b_{13}$};

\draw (5.5,1.7) node{\tiny$a_{11} (\bar{E_1}(1))$};
\draw (4,0.5) node{\tiny$(\bar{E_2}(2)) a_{12}$};

\draw (5.5,-0.8) node{\tiny$a_{13} (E_1(1))$};
\draw (6.65,0.6) node{\tiny$a_{14}$};
\draw (6.8,0.35) node{\tiny$ (E_2(1))$};
\draw (5.5,0.5)node{$P_1$};
\draw[blue] (5,0) node{\tiny$\bullet$};
\draw[blue] (6,0) node{\tiny$\bullet$};
\draw[blue] (6,1) node{\tiny$\bullet$};
\draw[blue] (5,1) node{\tiny$\bullet$};

\draw (0,-4)--(1,-4)--(1,-3)--(0,-3)--(0,-4);
\draw (0,-4)--(0,-4.45);
\draw[blue] (0,-4.45)--(1,-4.45);
\draw(1,-4.45)--(1,-4);
\draw (1,-4)--(1.45,-4);
\draw[teal] (1.45,-4)--(1.45,-3);
\draw (1.45,-3)--(1,-3);
\draw (0,-3)--(0,-2.55);
\draw[cyan] (0,-2.55)--(1,-2.55);
\draw(1,-2.55)--(1,-3);
\draw (0,-4)--(-0.45,-4);
\draw[violet] (-0.45,-4)--(-0.45,-3);
\draw[] (-0.45,-3)--(0,-3);

\draw (-0.2,-2.6) node{\tiny$b_{21}$};
\draw (1.2,-2.6) node {\tiny$b^\prime_{21}$};
\draw (-0.3,-4.1) node{\tiny$b_{22}$};
\draw (-0.3,-2.85) node{\tiny$b_{22}^\prime$};
\draw (1.2,-2.85) node{\tiny$b_{24}$};
\draw (1.3,-4.1) node{\tiny$b_{24^\prime}$};
\draw (-0.2,-4.4) node{\tiny$b_{23}^\prime$};
\draw (1.2,-4.4) node{\tiny$b_{23}$};

\draw (0.5,-2.3) node{\tiny$a_{21} (\bar{E_1}(2))$};
\draw (-1,-3.5) node{\tiny$(\bar{E_2}(2)) a_{22} $};
\draw (0.5,-4.6) node{\tiny$a_{23} (E_1(2))$};
\draw (2,-3.5) node{\tiny$a_{24}(E_2(2))$};
\draw (0.5,-3.5)node{$P_2$};
\draw[green] (0,-4) node{\tiny$\bullet$};
\draw[green] (1,-4) node{\tiny$\bullet$};
\draw[green] (1,-3) node{\tiny$\bullet$};
\draw[green] (0,-3) node{\tiny$\bullet$};

\draw (5,-4)--(6,-4)--(6,-3)--(5,-3)--(5,-4);
\draw (5,-4)--(5,-4.45);
\draw[orange] (5,-4.45)--(6,-4.45);
\draw(6,-4.45)--(6,-4);
\draw (6,-4)--(6.45,-4);
\draw[violet] (6.45,-4)--(6.45,-3);
\draw (6.45,-3)--(6,-3);
\draw (5,-3)--(5,-2.55);
\draw[green] (5,-2.55)--(6,-2.55);
\draw(6,-2.55)--(6,-3);
\draw (5,-4)--(4.55,-4);
\draw[magenta] (4.55,-4)--(4.55,-3);
\draw(4.55,-3)--(5,-3);

\draw (4.8,-2.6) node{\tiny$b_{31}$};
\draw (6.2,-2.6) node {\tiny$b^\prime_{31}$};
\draw (4.7,-4.1) node{\tiny$b_{32}$};
\draw (4.7,-2.85) node{\tiny$b_{32}^\prime$};
\draw (6.2,-2.85) node{\tiny$b_{34}$};
\draw (6.3,-4.1) node{\tiny$b_{34^\prime}$};
\draw (4.8,-4.4) node{\tiny$b_{33}^\prime$};
\draw (6.2,-4.4) node{\tiny$b_{33}$};

\draw (5.5,-2.3) node{\tiny$a_{31} (\bar{E_1}(3))$};
\draw (4.3,-3.35) node{\tiny$a_{32}$};
\draw (3.8,-3.35) node{\tiny$(\bar{E_2}(3))$};
\draw (5.5,-4.6) node{\tiny$a_{33}$ $ (E_1(3))$};
\draw (6.7,-3.5) node{\tiny$a_{34}$};
\draw (6.8,-3.7) node{\tiny$(E_2(3))$};

\draw (5.5,-3.5)node{$P_3$};
\draw[yellow] (5,-4) node{\tiny$\bullet$};
\draw[yellow] (6,-4) node{\tiny$\bullet$};
\draw[yellow] (6,-3) node{\tiny$\bullet$};
\draw[yellow] (5,-3) node{\tiny$\bullet$};

\draw (0,-8)--(1,-8)--(1,-7)--(0,-7)--(0,-8);
\draw (0,-8)--(0,-8.45);
\draw[green] (0,-8.45)--(1,-8.45);
\draw (1,-8.45)--(1,-8);
\draw (1,-8)--(1.45,-8);
\draw[yellow] (1.45,-8)--(1.45,-7);
\draw (1.45,-7)--(1,-7);
\draw (0,-7)--(0,-6.55);
\draw[blue] (0,-6.55)--(1,-6.55);
\draw (1,-6.55)--(1,-7);
\draw (0,-8)--(-0.45,-8);
\draw[lime] (-0.45,-8)--(-0.45,-7);
\draw (-0.45,-7)--(0,-7);

\draw (-0.2,-6.6) node{\tiny$b_{41}$};
\draw (1.2,-6.6) node {\tiny$b^\prime_{41}$};
\draw (-0.3,-8.1) node{\tiny$b_{42}$};
\draw (-0.3,-6.85) node{\tiny$b_{42}^\prime$};
\draw (1.2,-6.85) node{\tiny$b_{44}$};
\draw (1.3,-8.1) node{\tiny$b_{44^\prime}$};
\draw (-0.2,-8.4) node{\tiny$b_{43}^\prime$};
\draw (1.2,-8.4) node{\tiny$b_{43}$};

\draw (0.5,-6.3) node{\tiny$a_{41}$ $(\bar{E_1}(4))$};
\draw (-0.7,-7.5) node{\tiny$a_{42}$};
\draw (-1.2,-7.5) node {\tiny$(\bar{E_2}(4))$};
\draw (0.5,-8.65) node{\tiny$a_{43}$ $ (E_1(4))$};
\draw (2.1,-7.5) node{\tiny$a_{44}$ $(E_2(4))$};
\draw (0.5,-7.5)node{$P_4$};
\draw[] (0,-8) node{\tiny$\bullet$};
\draw[] (1,-8) node{\tiny$\bullet$};
\draw[] (1,-7) node{\tiny$\bullet$};
\draw[] (0,-7) node{\tiny$\bullet$};

\draw[red] (5,-6) node{$\bullet$ $V_0$};
\draw[blue] (5,-6.4) node{$\bullet$ $V_1$};
\draw[green] (5,-6.8) node{$\bullet$ $V_2$};
\draw[yellow] (5,-7.2) node{$\bullet$ $V_3$};
\draw (5,-7.6) node{$\bullet$ $V_4$};
\end{tikzpicture}
\caption{Translation surface for $K_5\setminus\{e_{v_0v_1}\}$}
\label{figure21}
\end{center}
\end{figure}

\begin{repthm}[\ref{Main Result}]
  Every finite graph admits a essential-systolic embedding on a translation surface.  
\end{repthm}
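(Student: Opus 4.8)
The plan is to deduce the statement directly from Theorem~\ref{main theorem 1} and Proposition~\ref{proposition 1}, reducing the general case to the complete-graph case that has already been handled. The key observation is purely combinatorial: every finite graph sits inside a sufficiently large complete graph as a subgraph, so no new geometric construction is needed.

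First I would fix a finite graph $G$ and let $m$ denote its number of vertices. I would then choose an odd integer $n$ with $n \geq \max\{m, 5\}$; such an $n$ always exists, since one may take the least odd integer that is at least $\max\{m,5\}$. Identifying the vertices of $G$ with $m$ distinct vertices of $K_n$ and matching each edge of $G$ with the corresponding edge of $K_n$ exhibits $G$ as a subgraph of $K_n$. When $m < n$, the unused $n - m$ vertices of $K_n$ and the edges among them simply do not appear in $G$; this is harmless, since the notion of subgraph does not require that all vertices or edges of the ambient graph be used.

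Next I would invoke Proposition~\ref{proposition 1}, which applies precisely because $n$ is odd and $n \geq 5$: every subgraph of $K_n$ admits an essential-systolic embedding on a translation surface of genus $g_n$. Since $G$ is such a subgraph, it admits an essential-systolic embedding on a translation surface of genus $g_n = 1 - n\left(1 - \frac{n-1}{2}\right)$, which completes the argument.

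There is no substantial obstacle here; the entire content is carried by the construction underlying Theorem~\ref{main theorem 1} together with its modification in Proposition~\ref{proposition 1}. The only point requiring a moment's care is the parity-and-size constraint on $n$: one must ensure that an admissible odd $n \geq 5$ can always be chosen no matter how small $G$ is (in particular for graphs with fewer than five vertices), and the choice $n \geq \max\{m,5\}$ guarantees exactly this.
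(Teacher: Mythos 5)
Your argument is correct and is essentially identical to the paper's own proof: both deduce the result by embedding $G$ as a subgraph of $K_n$ for a suitable odd $n\geq 5$ and then invoking Proposition~\ref{proposition 1}. Your version merely spells out the choice of $n$ (and the small-graph case $m<5$) a little more explicitly than the paper does.
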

\begin{proof}
Any graph $G$ with $m$ vertices is a subgraph of the complete graph $K_n$, for an odd integer $n$. By Proposition~\ref{proposition 1}, the theorem now follows.
\end{proof}

 We define $P_G$, the set of all genera $g$ such that $G$ admits an essential-systolic embedding on a translation surface of the genus $g$, which is nonempty by Theorem \ref{Main Result}. 
 \begin{remark}
If a graph $G$ admits an essential-systolic embedding on a translation surface of genus $g$, then there exists $g^\prime > g$ such that $G$ also admits such an embedding on a translation surface of genus $g^\prime$. Consequently, the notion of a maximum genus does not make sense, and we therefore focus on determining the minimum genus for such embeddings.

 \end{remark}
  Now, we define
$g^{\min}_G$ := $\min \left\{ g \mid g \in P_G \right\}$. The following result provides an upper bound for $g^{\min}_G$.

\begin{repthm}[\ref{upper bound}]
For a graph \( G \) with \( n \) vertices, an upper bound on the minimum genus of a translation surface that admits such an embedding is given by
 \[
g^{\min}_G \leq 
\begin{cases}
1 - n\left(1 - \frac{n - 1}{2}\right), & \text{if } n \geq 5 \text{ and } n \text{ is odd}, \\[0.5em]
1 - (n + 1)\left(1 - \frac{n}{2}\right), & \text{if } n \geq 5 \text{ and } n \text{ is even and} \\[0.5em]
6, & \text{if } n < 5.
\end{cases}
\]

\end{repthm}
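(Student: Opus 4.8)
The plan is to reduce every case to Proposition~\ref{proposition 1} by embedding $G$ as a subgraph of a suitable complete graph $K_m$ with $m$ odd and $m \geq 5$, for which the essential-systolic embedding from Theorem~\ref{main theorem 1} is available. Since any graph on $n$ vertices is a subgraph of $K_n$, and $K_n$ is itself a subgraph of $K_m$ whenever $m \geq n$, the only freedom I need to exploit is the choice of $m$: I want the \emph{smallest} odd integer $m \geq 5$ satisfying $m \geq n$, because Proposition~\ref{proposition 1} then places $G$ on a surface of genus $g_m = 1 - m\left(1 - \frac{m-1}{2}\right)$, yielding $g^{\min}_G \leq g_m$.

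First I would split into the three ranges appearing in the statement. For $n \geq 5$ odd, the correct choice is $m = n$ itself; then $G \subseteq K_n$, and Proposition~\ref{proposition 1} directly gives $g^{\min}_G \leq g_n = 1 - n\left(1 - \frac{n-1}{2}\right)$, which is the first line. For $n \geq 5$ even, $n$ is not an admissible index, so I take $m = n+1$, which is odd and at least $5$; since $G \subseteq K_n \subseteq K_{n+1}$, Proposition~\ref{proposition 1} yields $g^{\min}_G \leq g_{n+1} = 1 - (n+1)\left(1 - \frac{n}{2}\right)$, matching the second line. For $n < 5$, the smallest odd integer $\geq 5$ is $m = 5$, and $G \subseteq K_n \subseteq K_5$; Proposition~\ref{proposition 1} then gives $g^{\min}_G \leq g_5$, and the direct computation $g_5 = 1 - 5(1 - 2) = 6$ produces the third line.

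The only verification that remains is the arithmetic identifying each $g_m$ with the corresponding case of the bound, which is routine. There is no genuine obstacle in this argument: essentially all of the content is packaged into Theorem~\ref{main theorem 1} and Proposition~\ref{proposition 1}, and the present theorem is a bookkeeping statement that organizes the resulting genus bounds according to the parity and size of $n$. The one point worth double-checking is that the chosen index $m$ is genuinely admissible (odd and at least $5$) and is the smallest such index dominating $n$; this is what makes the stated bound the sharpest one obtainable from this particular construction, though since the theorem only asserts an upper bound, minimality of $m$ is not strictly required for correctness.
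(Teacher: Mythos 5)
Your proposal is correct and follows essentially the same route as the paper: reduce to Proposition~\ref{proposition 1} by choosing $K_n$ for odd $n\geq 5$, $K_{n+1}$ for even $n\geq 5$, and $K_5$ for $n<5$, then read off the genus $g_m$ from Theorem~\ref{main theorem 1}. The arithmetic, including $g_5 = 1 - 5(1-2) = 6$, matches the paper's computation exactly.
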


\begin{proof}

For $n \geq 5$, we consider the graph $G$ as a subgraph of the complete graph $K_n$ when $n$ is odd, or $K_{n+1}$ when $n$ is even. Then, Theorem~\ref{main theorem 1} and Proposition~\ref{proposition 1} together imply that $G$ admits essential-systolic embedding on a translation surface of genus $g_n$ or $g_{n+1}$, depending on whether $n$ is odd or even, respectively. Therefore, we have: $$
g^{\min}_G \leq g_n \quad \text{if } n \text{ is odd,} \quad \text{and} \quad g^{\min}_G \leq g_{n+1} \quad \text{if } n \text{ is even}.$$
Now, by Theorem~\ref{main theorem 1}, $$g_n = 1 - n\left(1 - \frac{n - 1}{2}\right), \quad \text{and} \quad g_{n+1} = 1 - (n + 1)\left(1 - \frac{n}{2}\right).$$

Similarly, for $n < 5$, we take $G$ as a subgraph of $K_5$. Using the same arguments, we conclude that:$$g^{\min}_G \leq g_{5} = 6.$$

\end{proof}

\section{Cellular-systolic embedding}

In this section, we study the cellular-systolic embedding of graphs on translation surfaces. Consider the graph $\Sigma_n$, wedge of $n$-circles. Note that for $n=1$, the graph $\Sigma_1$ does not admit cellular systolic embedding on any translation surface. To see this, if $\Sigma_1$ is cellularly embedded on a closed surface, then that surface is topologically a sphere, which is not a translation surface. In the following theorem, we show that, for $n\geq 2$, $\Sigma_n$ admits cellular-systolic embedding on a translation surface.

\begin{repthm}[\ref{cellular embedding}]
Let \( \Sigma_n \) denote the wedge of \( n \) circles, where \( n \geq 2 \). Then we have the following:
\begin{enumerate}
    \item $\Sigma_n$ admits a cellular-systolic embedding on a translation surface of genus $\left\lfloor \frac{n}{2} \right\rfloor$. Furthermore, this is the maximum possible genus of such a translation surface.
    
    \item The minimum genus of a translation surface on which $\Sigma_n$ admits a cellular-systolic embedding is \( \left\lceil \frac{n+3}{6} \right\rceil \).
    
    \item For any integer \( g \) satisfying $\left\lceil \frac{n+3}{6} \right\rceil \leq g \leq \left\lfloor \frac{n}{2} \right\rfloor$, the graph \( \Sigma_n \) admits cellular-systolic embedding on a translation surface of genus \( g \).
\end{enumerate}
\end{repthm}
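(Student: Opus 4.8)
The plan is to treat $\Sigma_n$ as a one-vertex map and to translate the whole statement into a polygon-gluing problem. Since $\Sigma_n$ has a single vertex and $n$ loops, in any cellular-systolic embedding on a translation surface $X$ the unique vertex must be the only singular (or marked) point, the $n$ loops are exactly the $n$ systolic connections, and cutting $X$ along $\Sigma_n$ produces a finite collection of Euclidean polygons --- the faces --- whose sides are identified in parallel pairs by translation. All sides therefore share the common length $\ell$ equal to the systole, and each of the $n$ edges corresponds to one identified pair. Writing $F$ for the number of faces, the Euler characteristic relation $V-E+F=2-2g$ with $V=1$ and $E=n$ gives the master identity
\begin{equation*}
F = n+1-2g.
\end{equation*}
Every assertion then becomes a statement about which values of $F$ (equivalently $g$) are realizable by such a gluing, subject to the systolic constraint that the $n$ sides are precisely the length-$\ell$ saddle connections, and that $\Gamma_X\cong\Sigma_n$.

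For the upper bound in (1), a cellular embedding has $F\geq 1$, so $n+1-2g\geq 1$ and hence $g\leq\lfloor n/2\rfloor$. To see this is attained I would exhibit an explicit extremal gluing: when $n$ is even take $F=1$, a single $2n$-gon with its sides identified in parallel pairs so that all $2n$ vertices collapse to one point (the $4g$-gon model with $g=n/2$); when $n$ is odd take $F=2$. In each case one checks directly that all $2n$ sides have equal length, are glued purely by translation, and that the shortest saddle connections are exactly these $n$ sides, so that $\Gamma_X\cong\Sigma_n$ and the embedding is cellular of genus $\lfloor n/2\rfloor$.

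For the lower bound in (2), I would argue that each face, being a flat disk bounded by geodesic segments meeting at the single cone point with positive corner angles, has at least three sides: Gauss--Bonnet on a face with $d$ corners forces the interior angles to sum to $(d-2)\pi$, which is impossible for $d\leq 2$. Hence $2n=\sum_f d(f)\geq 3F$, and combined with $F=n+1-2g$ this yields $g\geq (n+3)/6$, i.e.\ $g\geq\lceil (n+3)/6\rceil$; the same bound also follows from the Boissy--Geninska estimate~\cite{Bo21} that a genus-$g$ translation surface with one singular point has at most $3(2g-1)$ shortest saddle connections. Attaining it requires $F$ to be as large as possible, that is, essentially a one-vertex triangulation by congruent triangles (all translates of a fixed triangle, so that the holonomy stays trivial), with a bounded number of larger faces inserted to absorb $n \bmod 6$; the equal sides are then verified to be the unique shortest saddle connections.

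For (3) I would interpolate between the two extremal models. Starting from the maximal-face (minimal-genus) gluing, I would repeatedly perform a local re-gluing that merges faces so as to decrease $F$ by exactly $2$ --- equivalently, increase $g$ by $1$ --- while keeping one vertex, $n$ sides of the common length $\ell$, and all identifications by translation, continuing until the single- or double-face (maximal-genus) model is reached; this produces a translation surface for every $g$ with $\lceil (n+3)/6\rceil\leq g\leq\lfloor n/2\rfloor$. The main obstacle throughout is the systolic condition: after each modification one must certify that the re-gluing creates no saddle connection shorter than $\ell$ and leaves exactly $n$ of length $\ell$, while simultaneously preserving trivial holonomy (so that $X$ remains a genuine translation surface rather than a half-translation surface) and the disk property of every face. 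Controlling the systole under these re-gluings --- ruling out short saddle connections that cut across several faces through the new identifications --- is the delicate heart of the argument, and is what forces the use of explicit, carefully shaped polygons rather than arbitrary combinatorial maps.
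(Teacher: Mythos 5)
Your framework is exactly the paper's: the identity $F=n+1-2g$ from Euler characteristic, the single $2n$-gon (even $n$) and two-face (odd $n$) models for the maximal genus, and the bound $n\le 3(2g-1)$ for the minimal genus. One genuinely nice point in your write-up is the elementary derivation of the lower bound in (2): since every flat polygonal face has interior angle sum $(d-2)\pi$ with positive corners, each face has $d\ge 3$ sides, so $2n=\sum_f d(f)\ge 3F=3(n+1-2g)$, giving $g\ge\lceil\frac{n+3}{6}\rceil$ without invoking Boissy--Geninska. The paper simply cites the Boissy--Geninska count of shortest saddle connections; your argument is self-contained and recovers the same inequality.

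The gap is that the existence half of (2) and all of (3) are announced rather than proved, and that is where essentially all of the content of the paper's proof lies. You say the minimal genus is attained by ``essentially a one-vertex triangulation by congruent triangles with a bounded number of larger faces inserted to absorb $n\bmod 6$,'' and that the intermediate genera are reached by ``a local re-gluing that merges faces so as to decrease $F$ by exactly $2$,'' but you never specify the polygons, the side pairings, or the triangle-merging move, and you explicitly defer the verification that each re-gluing preserves trivial holonomy, keeps every face a disk, and creates no saddle connection shorter than $\ell$ --- which you yourself identify as ``the delicate heart of the argument.'' The paper discharges this by exhibiting, for each residue class of $n$ modulo $6$ and each admissible $g$, a concrete decomposition: for part (3) it takes $2k$ equilateral triangles and one regular $(2n-6k)$-gon when $F=n-2g+1=2k+1$, and $2k-2$ equilateral triangles and two regular $(n-6k+6)$-gons when $F=2k$, with explicit gluing patterns (a side count confirms $3\cdot 2k+(2n-6k)=2n$, i.e.\ $n$ edges), and for part (2) it builds base surfaces for $\Sigma_4,\dots,\Sigma_9$ in genus $2$ and extends to higher genus by attaching $4(g-2)$ equilateral triangles. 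Until you supply comparable explicit gluings and check the systolic condition on them, your argument establishes only the two inequalities $\lceil\frac{n+3}{6}\rceil\le g\le\lfloor\frac{n}{2}\rfloor$ and the maximal-genus construction, not the realizability claims in (2) and (3).
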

\begin{proof}
\begin{enumerate}
\item We prove this result by explicitly building a translation surface on which $\Sigma_n$ admits a cellular-systolic embedding. If the graph $\Sigma_n$ admits such an embedding with $k\geq 1$ complementary disks, then the Euler characteristic formula gives $$ 1 - n + k = 2 - 2g \implies g = \frac{n + 1 - k}{2}.$$ Therefore, $g$ is maximized when $k = 1$ or $k = 2$, depending on whether $n$ is even or odd, respectively. In either case, the maximum genus is given by $\left\lfloor \frac{n}{2} \right\rfloor$.

To prove (1), it suffices to construct a translation surface of the genus $\left\lfloor \frac{n}{2} \right\rfloor$ on which $\Sigma_n$ admits cellular-systolic embedding. We proceed by considering the following two cases.

Case 1. We assume that $n$ is an even integer, i.e., $n = 2m$, for some $m \in \mathbb{N}$. In this case, we construct the translation surface by identifying opposite parallel sides of a regular $4m$-gon. This yields a translation surface with a single singular point of cone angle $(4m - 2)\pi$ and $n$ systolic connections, one for each side-pair. This gives the required cellular-systolic embedding of $\Sigma_n$. The genus of the resulting surface is $m = \left\lfloor \frac{n}{2} \right\rfloor$. For example, the construction of the translation surface for $\Sigma_4$ is shown in Figure~\ref{figure1}(a).

Case 2. In this case, we assume $n$ is an odd integer, i.e., $n = 2m + 1$, where $m \in \mathbb{N}$. To construct the desired translation surface, we begin with a regular $(2m+1)$-gon $P$, and label its sides by $s_0, s_1, \dots, s_{2m},$ in counterclockwise order. Then, take the reflection of $P$ on the side $s_0$, and denote this reflected polygon by $P'$. Let $s_i'$ denote the image of side $s_i$ under the reflection, for $i = 0, 1, \dots, 2m$.
We identify the sides of $P$ and $P'$ as follows: for each $i = 1, 2, \dots, 2m$, identify $s_i$ with $s_{2m+1 - i}'$ by translation. This yields a translation surface with a single singular point, where all the vertices of the polygons are identified, cone angle $(4m - 2)\pi$. The surface has exactly $2m+1$ systolic connections corresponding to the identified edges. The genus of the resulting surface is $g = m=\left\lfloor \frac{n}{2} \right\rfloor$ and provides a cellular-systolic embedding for $\Sigma_n$. For example, the construction of the translation surface for $\Sigma_5$ is illustrated in Figure~\ref{figure1}(b).

 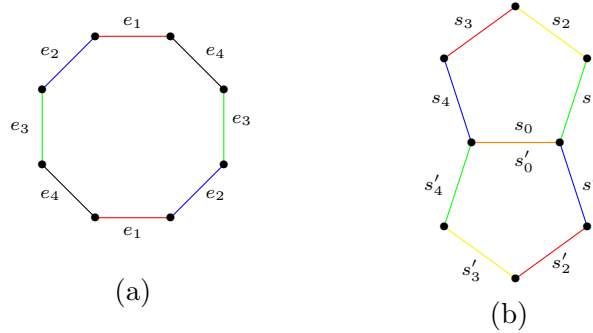
\begin{figure}[htbp]
\begin{center}
\begin{tikzpicture}[scale=1]
\draw[red] (-2,0)--(-1,0);
\draw[blue] (-1,0)--(-0.293,0.707);
\draw[green] (-0.293,0.707)--(-0.293,1.707);
\draw  (-0.293,1.707)--(-1,2.414);
\draw[red] (-2, 2.414)--(-1,2.414);
\draw[blue] (-2,2.414)--(-2.707,1.707);
\draw[green] (-2.707,1.707)--(-2.707,0.707);
\draw (-2.707,0.707)--(-2,0);

\draw (-2,0) node{\tiny$\bullet$};
\draw (-1,0) node{\tiny$\bullet$};
\draw (-0.293,0.707) node{\tiny$\bullet$};
\draw (-0.293,1.707) node{\tiny$\bullet$};
\draw (-2,2.414) node{\tiny$\bullet$};
\draw (-1,2.414) node{\tiny$\bullet$};
\draw (-2.707,1.707) node{\tiny$\bullet$};
\draw (-2.707,0.707) node{\tiny$\bullet$};
\draw (-1.5,-0.2) node{\tiny$e_1$};
\draw (-0.4,0.3) node{\tiny$e_2$};
\draw (-0.05,1.3) node{\tiny$e_3$};
\draw (-0.4,2.2) node{\tiny$e_4$};
\draw (-1.5,2.6) node{\tiny$e_1$};
\draw (-2.6,2.2) node{\tiny$e_2$};
\draw (-3,1.2) node{\tiny$e_3$};
\draw (-2.6,0.3) node{\tiny$e_4$};

\draw (-1.5,-1) node {(a)};

\draw[orange] (3,1)--(4.174,1);
\draw[green] (4.174,1)--(4.538,2.118);
\draw[yellow] (4.538,2.118)--(3.587,2.809);
\draw[red] (3.587,2.809)--(2.636,2.118);
\draw[blue] (2.636,2.118)--(3,1);
\draw[blue] (4.174,1)--(4.538,-.118);
\draw[red] (4.538,-.118)--(3.587,-0.809);
\draw[yellow] (3.587,-0.809)--(2.636,-0.118);
\draw[green] (2.636,-0.118)--(3,1);
\draw (3,1) node{\tiny$\bullet$};
\draw (4.174,1) node{\tiny$\bullet$};
\draw (4.538,2.118) node{\tiny$\bullet$};
\draw (3.587,2.809) node{\tiny$\bullet$};
\draw (2.636,2.118) node{\tiny$\bullet$};
\draw (4.538,-0.118) node{\tiny$\bullet$};
\draw (3.587,-0.809) node{\tiny$\bullet$};
\draw (2.636,-0.118) node{\tiny$\bullet$};
\draw (3.5,-1.3) node {(b)};
\draw (3.7,1.2 ) node {\tiny$s_0$};
\draw (3.7,0.75 ) node {\tiny$s_0^\prime$};
\draw (4.6,1.559 ) node {\tiny$s_1$};
\draw (4.2,2.6 ) node {\tiny$s_2$};
\draw (2.9,2.6) node {\tiny$s_3$};
\draw (2.6,1.559) node {\tiny$s_4$};
\draw (4.6,.441) node {\tiny$s_1^\prime$};
\draw (4.2,-0.6) node {\tiny$s_2^\prime$};
\draw (3,-0.7) node {\tiny$s_3^\prime$};
\draw (2.5,.441) node {\tiny$s_4^\prime$};
\end{tikzpicture}
\end{center}
\caption{Translation Surfaces for (a) $\Sigma_4$ and (b) $\Sigma_5.$}
\label{figure1}
\end{figure}

\item The maximum number of systolic connections on a translation surface of genus $g$ and a single singular point is $3(2g-1)$ (Proposition 5.1 in~\cite{Bo21}).  If $\Sigma_n$ admits a cellular-systolic embedding on a translation surface of genus $g$, then we have $$ n \leq 3(2g - 1)  \implies \frac{n + 3}{6}\leq g.$$ To prove the result, it is sufficient to construct a translation surface of genus $\lceil \frac{n + 3}{6} \rceil$ on which $\Sigma_n$ admits a cellular-systolic embedding. 

For $n = 2, 3$, we have $\lceil \frac{n + 3}{6} \rceil = 1$. The desired translation surfaces for $\Sigma_2$ and $\Sigma_3$ are obtained from the square torus and the equilateral torus (formed by two equilateral triangles), respectively (see Figure~\ref{figure3}). Now, for $n \geq 4$, let $g = \lceil \frac{n + 3}{6} \rceil$. Then $g \geq 2$, and we have $6g - 9 < n \leq 6g - 6.$
Hence, $n \in P_g$ where $P_g = \{6g - 8, 6g - 7, 6g - 6, 6g - 5, 6g - 4, 6g - 3\}.$

Our objective is to construct, for every integer $g \geq 2$, a translation surface of genus $g$ that realises the graph $\Sigma_k$ for each $k \in P_g$.  
In the case of genus $g = 2$, we have $P_2 = \{4, 5, 6, 7, 8, 9\}$. The corresponding translation surfaces of genus $2$ for $\Sigma_k$, with $k \in P_2$, are depicted in Figures~\ref{figure sigma4} and~\ref{figure Sigma8}.

For any $g\geq 3$, to construct translation surfaces for $\Sigma_{6g - 8}, \Sigma_{6g - 7}, \Sigma_{6g - 6}, \dots,$ and $ \Sigma_{6g - 3}$, we take the polygons used in the construction of translation surfaces for $\Sigma_4, \Sigma_5, \Sigma_6, \dots,$ and $\Sigma_9$, respectively, and attach $4(g - 2)$ equilateral triangles. These pieces are glued together as shown in Figures~\ref{figure Sigma(6g-8)} to \ref{figure Sigma(6g-3)}. This gives the desired translation surfaces.
In these constructions, each side of the polygons represents a systolic connection of the surface.
Therefore, for every $k \in P_g$ and for all $g \geq 2$, we obtain translation surfaces of genus $g$ on which $\Sigma_k$ admits a cellular-systolic embedding. 

\begin{figure}[htbp]
\begin{center}
\begin{tikzpicture}[scale=2]
\draw[red] (0,0)--(1,0);
\draw[blue] (1,0)--(1,1);
\draw[red] (1,1)--(0,1);
\draw[blue]  (0,0)--(0,1);

\draw (0,0) node{$\bullet$};
\draw (1,0) node{$\bullet$};
\draw (1,1) node{$\bullet$};
\draw (0,1) node{$\bullet$};
\draw (0.5,-0.1) node{$e_1$};
\draw (0.5,1.1) node{$e_1$};
\draw (-0.15,0.5) node{$e_2$};
\draw (1.15,0.5) node{$e_2$};

\draw[red] (3,0)--(4,0);
\draw[blue] (3,0)--(3.5,0.866);
\draw[green] (3.5,0.866)--(4,0);
\draw[red] (3.5,0.866)--(4.5,0.866);
\draw[blue] (4,0)--(4.5,0.866);
\draw (3,0) node{$\bullet$};
\draw (4,0) node{$\bullet$};
\draw (3.5,0.866) node{$\bullet$};
\draw (4.5,0.866) node{$\bullet$};
\draw (3.5,-0.1) node{$e_1$};
\draw (4,1) node{$e_1$};
\draw (3.1,0.433) node{$e_2$};
\draw (4.4,0.433) node{$e_2$};
\draw (3.9,0.433) node{$e_3$};
\end{tikzpicture}
\caption{Translation surfaces for $\Sigma_2$ and $\Sigma_3$.}
\label{figure3}
\end{center}
\end{figure}
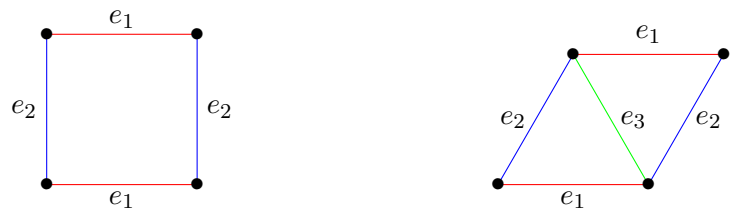

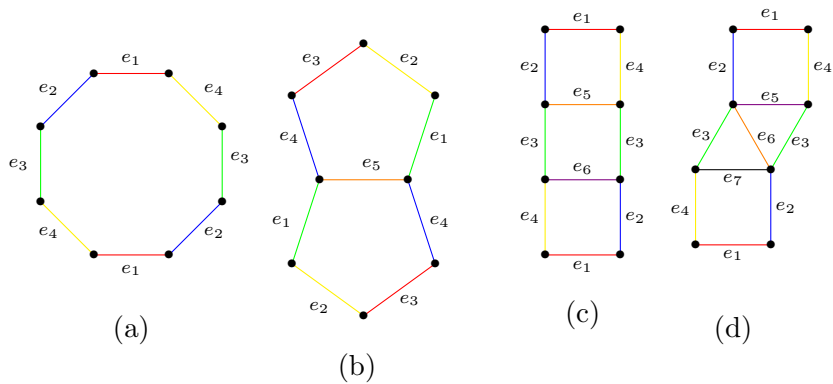
\begin{figure}[htbp]
\begin{center}
\begin{tikzpicture}[scale=1]
\draw[red] (0,0)--(1,0);
\draw[blue] (1,0)--(1.707,0.707);
\draw[green] (1.707,0.707)--(1.707,1.707);
\draw[yellow]  (1.707,1.707)--(1,2.414);
\draw[red] (0, 2.414)--(1,2.414);
\draw[blue] (0,2.414)--(-0.707,1.707);
\draw[green] (-0.707,1.707)--(-0.707,0.707);
\draw[yellow] (-0.707,0.707)--(0,0);
\draw (0,0) node{\tiny$\bullet$};
\draw (1,0) node{\tiny$\bullet$};
\draw (1.707,0.707) node{\tiny$\bullet$};
\draw (1.707,1.707) node{\tiny$\bullet$};
\draw (1,2.414) node{\tiny$\bullet$};
\draw (0,2.414) node{\tiny$\bullet$};
\draw (-0.707,1.707) node{\tiny$\bullet$};
\draw (-0.707,0.707) node{\tiny$\bullet$};
\draw (0.5, -1) node{(a)};
\draw (3.5,-1.5) node{(b)};
\draw (0.5,-0.2) node{\tiny$e_1$};
\draw (1.6,0.3) node{\tiny$e_2$};
\draw (1.95,1.3) node{\tiny$e_3$};
\draw (1.6,2.2) node{\tiny$e_4$};
\draw (0.5,2.6) node{\tiny$e_1$};
\draw (-0.6,2.2) node{\tiny$e_2$};
\draw (-1,1.2) node{\tiny$e_3$};
\draw (-0.6,0.3) node{\tiny$e_4$};

\draw[orange] (3,1)--(4.174,1);
\draw[green] (4.174,1)--(4.538,2.118);
\draw[yellow] (4.538,2.118)--(3.587,2.809);
\draw[red] (3.587,2.809)--(2.636,2.118);
\draw[blue] (2.636,2.118)--(3,1);
\draw[blue] (4.174,1)--(4.538,-.118);
\draw[red] (4.538,-.118)--(3.587,-0.809);
\draw[yellow] (3.587,-0.809)--(2.636,-0.118);
\draw[green] (2.636,-0.118)--(3,1);
\draw (3,1) node{\tiny$\bullet$};
\draw (4.174,1) node{\tiny$\bullet$};
\draw (4.538,2.118) node{\tiny$\bullet$};
\draw (3.587,2.809) node{\tiny$\bullet$};
\draw (2.636,2.118) node{\tiny$\bullet$};
\draw (4.538,-0.118) node{\tiny$\bullet$};
\draw (3.587,-0.809) node{\tiny$\bullet$};
\draw (2.636,-0.118) node{\tiny$\bullet$};
\draw (3.7,1.2 ) node {\tiny$e_5$};
\draw (4.6,1.559 ) node {\tiny$e_1$};
\draw (4.2,2.6 ) node {\tiny$e_2$};
\draw (2.9,2.6) node {\tiny$e_3$};
\draw (2.6,1.559) node {\tiny$e_4$};
\draw (4.6,.441) node {\tiny$e_4$};
\draw (4.2,-0.6) node {\tiny$e_3$};
\draw (3,-0.7) node {\tiny$e_2$};
\draw (2.5,.441) node {\tiny$e_1$};

\draw[red] (6,0)--(7,0);
\draw[blue] (7,0)--(7,1);
\draw[green] (7,1)--(7,2);
\draw[yellow] (7,2)--(7,3);
\draw[red] (7,3)--(6,3);
\draw[blue] (6,3)--(6,2);
\draw[green] (6,2)--(6,1);
\draw[yellow](6,1)--(6,0);
\draw[orange]{} (6,2)--(7,2);
\draw[violet] (6,1)--(7,1);
\draw (6.5,-0.15) node {\tiny$e_1$};
\draw (7.2,0.5) node {\tiny$e_2$};
\draw (7.2,1.5) node {\tiny$e_3$};
\draw (7.2,2.5) node {\tiny$e_4$};
\draw (6.5,3.15) node {\tiny$e_1$};
\draw (5.8,2.5) node {\tiny$e_2$};
\draw (5.8,1.5) node {\tiny$e_3$};
\draw (5.8,0.5) node {\tiny$e_4$};
\draw (6.5,2.15) node {\tiny$e_5$};
\draw (6.5,1.15) node {\tiny$e_6$};

\draw (6,0) node{\tiny$\bullet$};
\draw (7,0) node{\tiny$\bullet$};
\draw (7,1) node{\tiny$\bullet$};
\draw (7,2) node{\tiny$\bullet$};
\draw (7,3) node{\tiny$\bullet$};
\draw (6,3) node{\tiny$\bullet$};
\draw (6,2) node{\tiny$\bullet$};
\draw (6,1) node{\tiny$\bullet$};
\draw (6.5,-0.8) node {(c)};

\draw[yellow] (9.5,2)--(9.5,3);
\draw[red] (9.5,3)--(8.5,3);
\draw[blue] (8.5,3)--(8.5,2);
\draw[violet] (8.5,2)--(9.5,2);
\draw[orange] (8.5,2)--(9,1.134);
\draw[green] (9,1.134)--(9.5,2);
\draw (9,1.134)--(8,1.134);
\draw[green] (8,1.134)--(8.5,2);
\draw[yellow] (8,1.134)--(8,0.134);
\draw[red] (8,0.134)--(9,0.134);
\draw[blue] (9,0.134)--(9,1.134);

\draw (8.5,0) node {\tiny$e_1$};
\draw (9.2,0.634) node {\tiny$e_2$};
\draw (8.08,1.6) node {\tiny$e_3$};
\draw (9.4,1.5) node {\tiny$e_3$};
\draw (9.7,2.5) node {\tiny$e_4$};
\draw (9,3.2) node {\tiny$e_1$};
\draw (8.3,2.5) node {\tiny$e_2$};
\draw (7.8,0.6) node {\tiny$e_4$};
\draw (9,2.1) node {\tiny$e_5$};
\draw (8.95,1.56) node {\tiny$e_6$};
\draw (8.5,1) node {\tiny$e_7$};

\draw (9.5,2) node{\tiny$\bullet$};
\draw (9.5,3) node{\tiny$\bullet$};
\draw (8.5,3) node{\tiny$\bullet$};
\draw (8.5,2) node{\tiny$\bullet$};
\draw (9,1.134) node{\tiny$\bullet$};
\draw (8,1.134) node{\tiny$\bullet$};
\draw (9,0.134) node{\tiny$\bullet$};
\draw (8,0.134) node{\tiny$\bullet$};
\draw (8.5,-1) node {(d)};

\end{tikzpicture}

\caption{Translation surfaces for (a) $\Sigma_4$, (b) $\Sigma_5$, (c) $\Sigma_6$ and (d) $\Sigma_7.$}
\label{figure sigma4}
\end{center}
\end{figure}

\begin{figure}[htbp]
\begin{center}
\rotatebox{0}{
\begin{tikzpicture}[scale=1]
\draw[] (0,1)--(0.5,0.134);
\draw[blue] (0.5,.134)--(1,1);
\draw[red] (0.5,0.134)--(-0.5,0.134);
\draw[yellow] (-0.5,0.134)--(0,1);
\draw[green] (1,1)--(1,2);
\draw[blue] (0,2)--(0.5,2.866);
\draw[violet] (0.5,2.866)--(1,2);
\draw[red] (0.5,2.866)--(1.5,2.866);
\draw[yellow] (1.5,2.866)--(1,2);
\draw[green] (0,2)--(0,1);
\draw[orange]{} (0,2)--(1,2);
\draw[violet] (0,1)--(1,1);
\draw (0,0) node {\tiny$e_1$};
\draw (0.9,0.5) node {\tiny$e_2$};
\draw (1.2,1.5) node {\tiny$e_3$};
\draw (1.5,2.433) node {\tiny$e_4$};
\draw (1,3) node {\tiny$e_1$};
\draw (0,2.433) node {\tiny$e_2$};
\draw (-0.2,1.5) node {\tiny$e_3$};
\draw (-0.5,0.6) node {\tiny$e_4$};
\draw (0.95,2.433) node {\tiny$e_5$};
\draw (0.5,2.1) node {\tiny$e_6$};
\draw (0.5,1.1) node {\tiny$e_7$};
\draw (0.15,0.4) node {\tiny$e_8$};

\draw (0,1) node{\tiny$\bullet$};
\draw (0.5,0.134) node{\tiny$\bullet$};
\draw (1,1) node{\tiny$\bullet$};
\draw (-0.5,0.134) node{\tiny$\bullet$};
\draw (0,2) node{\tiny$\bullet$};
\draw (1,2) node{\tiny$\bullet$};
\draw (0.5,2.866) node{\tiny$\bullet$};
\draw (1.5,2.866) node{\tiny$\bullet$};
\draw (0,-0.8) node {(e)};

\draw[orange] (3,1.732)--(3.5,0.866);
\draw[green] (3.5,0.866)--(4,1.732);
\draw[] (3.5,0.866)--(2.5,0.866);
\draw[green] (2.5,0.866)--(3,1.714);
\draw[magenta] (2.5,0.866)--(3,0);
\draw[blue] (3,0)--(3.5,0.866);
\draw[red] (3,0)--(2,0);
\draw[yellow] (2,0)--(2.5,0.866);
\draw[blue] (3,1.714)--(3.5,2.58);
\draw[violet] (3.5,2.58)--(4,1.714);
\draw[red] (3.5,2.58)--(4.5,2.58);
\draw[yellow] (4.5,2.58)--(4,1.714);
\draw[cyan] (3,1.714)--(4,1.714);

\draw (2.5,-0.2) node {\tiny$e_1$};
\draw (3.4,0.3) node {\tiny$e_2$};
\draw (4,1.299) node {\tiny$e_3$};
\draw (4.5,2.2) node {\tiny$e_4$};
\draw (4,2.7) node {\tiny$e_1$};
\draw (3,2.2) node {\tiny$e_2$};
\draw (2.5,1.3) node {\tiny$e_3$};
\draw (2,0.433) node {\tiny$e_4$};
\draw (3.9,2.2) node {\tiny$e_5$};
\draw (3.5,1.6) node {\tiny$e_6$};
\draw (3.15,1.2) node {\tiny$e_7$};
\draw (3,0.7) node {\tiny$e_8$};
\draw (2.6,0.3) node {\tiny$e_9$};

\draw (3,1.714) node{\tiny$\bullet$};
\draw (3.5,0.866) node{\tiny$\bullet$};
\draw (4,1.714) node{\tiny$\bullet$};
\draw (2.5,0.866) node{\tiny$\bullet$};
\draw (3,0) node{\tiny$\bullet$};
\draw (3.5,0.866) node{\tiny$\bullet$};
\draw (2,0) node{\tiny$\bullet$};
\draw (3.5,2.58) node{\tiny$\bullet$};
\draw (4.5,2.58) node{\tiny$\bullet$};
\draw (2.5,-1) node {(f)};

\end{tikzpicture}
}
\caption{Translation surfaces for (e) $\Sigma_8$ and (f) $\Sigma_9.$}
\label{figure Sigma8}
\end{center}
\end{figure}
\begin{figure}[htbp]
\begin{center}
\rotatebox{0}{
\begin{tikzpicture}[scale=1]
\draw[] (0,0)--(1,0);
\draw[blue] (1,0)--(1.707,0.707);
\draw[green] (1.707,0.707)--(1.707,1.707);
\draw[red]  (1.707,1.707)--(1,2.414);
\draw[] (0, 2.414)--(1,2.414);
\draw[blue] (0,2.414)--(-0.707,1.707);
\draw[green] (-0.707,1.707)--(-0.707,0.707);
\draw[red] (-0.707,0.707)--(0,0);
\draw (1.5,0.3) node{\tiny$e_1$};
\draw (1.9,1.1) node{\tiny$e_2$};
\draw (1.2,2) node{\tiny$e_3$};
\draw (-0.5,2.2) node{\tiny$e_1$};
\draw (-0.9,1.1) node{\tiny$e_2$};
\draw (-0.1,0.35) node{\tiny$e_3$};
\draw (0,2.414)--(0.5,3.28)--(1,2.414);
\draw (0.5,3.28)--(1.5,3.28)--(1,2.414);
\draw (1,2.414)--(2,2.414)--(1.5,3.28);
\draw (1.5,3.28)--(2.5,3.28)--(2,2.414);
\draw[dashed] (2,2.414)--(3,2.414);
\draw[dashed] (2.5,3.28)--(3.5,3.28);
\draw (3.5,3.28)--(4.5,3.28)--(4,2.414)--(3.5,3.28);
\draw (3.5,3.28)--(3,2.414)--(4,2.414);
\draw (1,3.4) node{\tiny$a_1$};
\draw (2,3.4) node{\tiny$a_2$};
\draw (4,3.4) node{\tiny$a_{g-2}$};
\draw (1.7,2.25) node{\tiny$b_1$};
\draw (3.5,2.28) node{\tiny$b_{g-3}$};

\draw (0,0)--(0.5,-0.866)--(1,0);
\draw (0.5,-0.866)--(-0.5,-0.866)--(0,0);
\draw (0,0)--(-1,0)--(-0.5,-0.866);
\draw(-1,0)--(-1.5,-0.866)--(-0.5,-0.866);
\draw (-1,0)--(-2,0)--(-1.5,-0.866);
\draw (0.1,2.914) node{\tiny$c$};
\draw (0.9,-0.5) node {\tiny$c$};
\draw (4.3,2.7) node{\tiny$d$};
\draw (-4.4,-0.366) node{\tiny$d$};
\draw[dashed] (-2,0)--(-3,0);
\draw[dashed] (-1.5,-0.866)--(-3.5,-0.866);
\draw (-3.5,-0.866)--(-4.5,-0.866)--(-4,0)--(-3.5,-0.866);
\draw (-3,0)--(-4,0);
\draw (-3,0)--(-3.5,-0.866);
\draw (0,-1) node{\tiny$a_1$};
\draw (-1,-1) node {\tiny$a_2$};
\draw (-4,-1) node {\tiny$a_{g-2}$};
\draw (-0.5,0.2) node {\tiny$b_1$};
\draw (-1.5,0.2) node {\tiny$b_2$};
\draw (-3.5,0.2) node {\tiny$b_{g-3}$};
\end{tikzpicture}
}
\caption{Translation surface for $\Sigma_{6g-8}$}
\label{figure Sigma(6g-8)}
\end{center}
\end{figure}

\begin{figure}[htbp]
\begin{center}
\begin{tikzpicture}[scale=1]
\draw[orange] (0,0)--(1.174,0);
\draw[green] (1.174,0)--(1.538,1.118);
\draw[] (1.538,1.118)--(0.587,1.809);
\draw[red] (0.587,1.809)--(-0.364,1.118);
\draw[blue] (-0.364,1.118)--(0,0);
\draw[blue] (1.174,0)--(1.538,-1.118);
\draw[red] (1.538,-1.118)--(0.587,-1.809);
\draw[] (0.587,-1.809)--(-0.364,-1.118);
\draw[green] (-0.364,-1.118)--(0,0);

\draw (1.3,-1.6) node{\tiny$e_1$};
\draw (1.56, -0.55) node{\tiny$e_2$};
\draw (0.1,1.7) node{\tiny$e_1$};
\draw (-0.4,0.55) node{\tiny$e_2$};
\draw (-.4,-0.55) node{\tiny$e_3$};
\draw (1.55,0.5) node{\tiny$e_3$};

\draw (0.587,1.809)--(1.5003,2.2161)--(1.538,1.118)--(2.309,1.634)--(1.5003,2.2161);
\draw[dashed] (2.309,1.634)--(3.118,1.041);
\draw[dashed] (1.538,1.118)--(2.205,0.634);
\draw (3.118,1.041)--(2.205,0.634);
\draw (2.205,0.634)--(3.014,0.046);
\draw (3.118,1.041)--(3.928,0.454)--(3.014,0.046)--(3.118,1.041);

\draw(-0.364,-1.118)--(-0.467,-2.112)--(0.587,-1.809);
\draw (-0.467,-2.112)--(-1.277,-1.525)--(-0.364,-1.118);
\draw[dashed] (-0.364,-1.118)--(-1.173,-0.530);
\draw[dashed] (-1.277,-1.525)--(-2.086,-0.937);
\draw (-1.173,-0.530)--(-2.086,-0.937);
\draw (-1.173,-0.530)--(-1.982,0.056);
\draw (-2.086,-0.937)--(-2.895,-0.350);
\draw (-2.895,-0.350)--(-1.982,0.056)--(-2.086,-0.937);

\draw (2,2) node{\tiny$a_1$};
\draw (-0.9,-2) node{\tiny$a_1$};
\draw (3.75,0.9) node{\tiny$a_{g-2}$};
\draw (-2.8,-0.8) node {\tiny$a_{g-2}$};
\draw (2.5,0.1) node {\tiny$b_{g-3}$};
\draw (-1.4,0) node {\tiny$b_{g-3}$};
\draw (1,2.2) node {\tiny$c$};
\draw (0,-2.2) node {\tiny$c$};
\draw (3.6,0.15) node {\tiny$d$};
\draw (-2.6,0) node {\tiny$d$};

\end{tikzpicture}

\caption{Translation surface for $\Sigma_{6g-7}$}
\label{figure Sigma(6g-7)}
\end{center}
\end{figure}

\begin{figure}[htbp]
\begin{center}
\begin{tikzpicture}[scale=1]
\draw[] (0,0)--(1,0);
\draw[blue] (1,0)--(1,1);
\draw[green] (1,1)--(1,2);
\draw[yellow] (1,2)--(1,3);
\draw[] (1,3)--(0,3);
\draw[blue] (0,3)--(0,2);
\draw[green] (0,2)--(0,1);
\draw[yellow](0,1)--(0,0);
\draw[orange]{} (0,2)--(1,2);
\draw[violet] (0,1)--(1,1);

\draw (1.2,.5) node{\tiny$e_1$};
\draw (1.2,1.5) node{\tiny$e_2$};
\draw (1.2,2.4) node{\tiny$e_3$};
\draw (-0.2,.5) node{\tiny$e_3$};
\draw (-0.2,1.5) node{\tiny$e_2$};
\draw (-0.2,2.4) node{\tiny$e_1$};

\draw (0,0)--(0.5,-0.866)--(1,0);
\draw (0.5,-0.866)--(-0.5,-0.866)--(0,0);
\draw (0,0)--(-1,0)--(-0.5,-0.866);
\draw (-1,0)--(-1.5,-0.866)--(-0.5,-0.866);
\draw[dashed] (-1,0)--(-2,0);
\draw[dashed] (-1.5,-0.866)--(-2.5,-0.866);
\draw (-3,0)--(-2.5,-0.866)--(-3.5,-0.866)--(-3,0);
\draw (-3,0)-- (-2,0)--(-2.5,-0.866);

\draw (0,3)--(0.5,3.866)--(1,3);
\draw (0.5,3.866)--(1.5,3.866)--(1,3);
\draw (1,3)--(2,3)--(1.5,3.866)--(2.5,3.866)--(2,3);
\draw[dashed] (2.5,3.866)--(3.5,3.866);
\draw[dashed] (2,3)--(3,3);
\draw (3.5,3.866)--(4,3)--(4.5,3.866)--(3.5,3.866);
\draw (4,3)--(3,3)--(3.5,3.866);
\draw (1,4) node {\tiny$a_1$};
\draw (2,4) node {\tiny$a_2$};
\draw (4,4) node {\tiny$a_{g-2}$};

\draw (0,-1) node{\tiny$a_1$};
\draw (-1,-1) node{\tiny$a_2$};
\draw (-3,-1) node{\tiny$a_{g-2}$};
\draw (1.5,2.8) node{\tiny$b_1$};
\draw (3.5,2.8) node{\tiny$b_{g-3}$};
\draw (-0.5,0.2) node {\tiny$b_1$};
\draw (-2.5,0.2) node{\tiny$b_{g-3}$};
\draw (0.1,3.5) node {\tiny$c$};
\draw (0.8,-.5) node {\tiny$c$};
\draw (4.35,3.4) node {\tiny$d$};
\draw (-3.38,-.35) node {\tiny$d$};
\end{tikzpicture}
\caption{ Translation surface for $\Sigma_{6g-6}$}
\label{figure sigma(6g-6)}
\end{center}
\end{figure}

\begin{figure}[htbp]
\begin{center}
\begin{tikzpicture}[scale=1]
\draw[red] (1,2)--(1,3);
\draw[] (1,3)--(0,3);
\draw[blue] (0,3)--(0,2);
\draw[violet] (0,2)--(1,2);
\draw[orange] (0,2)--(0.5,1.134);
\draw[green] (0.5,1.134)--(1,2);
\draw (0.5,1.134)--(-.5,1.134);
\draw[green] (-.5,1.134)--(0,2);
\draw[red] (-0.5,1.134)--(-0.5,0.134);
\draw[] (-0.5,0.134)--(0.5,0.134);
\draw[blue] (0.5,0.134)--(0.5,1.134);

\draw (1.2,2.4) node{\tiny$e_3$};
\draw (-.7,0.4) node{\tiny$e_3$};
\draw (-0.4,1.7) node{\tiny$e_2$};
\draw (1,1.5) node{\tiny$e_2$};
\draw (-0.2,2.6) node{\tiny$e_1$};
\draw (0.7,0.5) node{\tiny$e_1$};

\draw (0,3)--(0.5,3.866)--(1.5,3.866)--(1,3)--(0.5,3.866);
\draw (1,3)--(2,3)--(1.5,3.866)--(2.5,3.866)--(2,3);
\draw[dashed] (2,3)--(4,3);
\draw[dashed] (2.5,3.866)--(4.5,3.866);
\draw (4.5,3.866)--(5.5,3.866)--(5,3)--(4,3)--(4.5,3.866)--(5,3);
\draw (0.5,0.134)--(0,-0.732)--(-1,-0.732)--(-0.5,0.134)--(0,-0.732);
\draw (-0.5,0.134)--(-1.5,0.134)--(-1,-0.732)--(-2,-0.732)--(-1.5,0.134);
\draw[dashed] (-1.5,0.134)--(-3.5,0.134);
\draw[dashed] (-2,-0.732)--(-4,-0.732);
\draw (-4,-0.732)--(-3.5,0.134)--(-4.5,0.134)--(-4,-0.732)--(-5,-0.732)--(-4.5,0.134);

\draw (0.9,4) node {\tiny$a_1$};
\draw (1.9,4) node {\tiny$a_2$};
\draw (4.9,4) node {\tiny$a_{g-2}$};
\draw (4.7,2.8) node {\tiny$b_{g-3}$};
\draw (1.6,2.8) node {\tiny$b_1$};
\draw (0.1,3.5) node{\tiny$c$};
\draw (5.5,3.5) node{\tiny$d$};
\draw (0.3,-0.5) node{\tiny$c$};
\draw (-5,-0.3) node{\tiny$d$};

\draw (-0.5,-1) node {\tiny$a_1$};
\draw (-1.5,-1) node {\tiny$a_2$};
\draw (-4.5,-1) node {\tiny$a_{g-2}$};
\draw (-4.3,0.35) node {\tiny$b_{g-3}$};
\draw (-1.3,0.35) node {\tiny$b_{1}$};

\end{tikzpicture}
\caption{Translation surface for $\Sigma_{6g-5}$}
\label{figure Sigma 6g-5}
\end{center}
\end{figure}

\begin{figure}[htbp]
\begin{center}
\begin{tikzpicture}[scale=1]
\draw[] (0,1)--(0.5,0.134);
\draw[blue] (0.5,.134)--(1,1);
\draw[] (0.5,0.134)--(-0.5,0.134);
\draw[red] (-0.5,0.134)--(0,1);
\draw[green] (1,1)--(1,2);
\draw[blue] (0,2)--(0.5,2.866);
\draw[violet] (0.5,2.866)--(1,2);
\draw[] (0.5,2.866)--(1.5,2.866);
\draw[red] (1.5,2.866)--(1,2);
\draw[green] (0,2)--(0,1);
\draw[orange]{} (0,2)--(1,2);
\draw[violet] (0,1)--(1,1);

\draw (0.5,2.866)--(1,3.866)--(2,3.866)--(1.5,2.866)--(1,3.866);
\draw (1.5,2.866)--(2.5,2.866)--(3,3.866)--(2,3.866)--(2.5,2.866);
\draw[dashed] (3,3.866)--(5,3.866);
\draw[dashed] (2.5,2.866)--(4.5,2.866);
\draw (5,3.866)--(4.5,2.866)--(5.5,2.866)--(6,3.866)--(5,3.866)--(5.5,2.866);

\draw (0.5,0.134)--(0,-0.732)--(-1,-0.732)--(-0.5,0.134)--(0,-0.732);
\draw (-0.5,0.134)--(-1.5,0.134)--(-1,-0.732)--(-2,-0.732)--(-1.5,0.134);
\draw[dashed] (-1.5,0.134)--(-3.5,0.134);
\draw[dashed] (-2,-0.732)--(-4,-0.732);
\draw (-4,-0.732)--(-3.5,0.134)--(-4.5,0.134)--(-4,-0.732)--(-5,-0.732)--(-4.5,0.134);

\draw (1.5,4) node{\tiny$a_1$};
\draw (2.5,4) node{\tiny$a_2$};
\draw (5.5,4) node{\tiny$a_{g-2}$};

\draw (5.1,2.7) node{\tiny$b_{g-3}$};
\draw (2.1,2.7) node{\tiny$b_{1}$};
\draw (-1,0.3) node{\tiny$b_{1}$};
\draw (-4,0.3) node{\tiny$b_{g-3}$};

\draw (-0.5,-0.9) node{\tiny$a_{1}$};
\draw (-1.5,-0.9) node{\tiny$a_{2}$};
\draw (-4.5,-0.9) node{\tiny$a_{g-2}$};

\draw (0.4,-0.4) node{\tiny$c$};
\draw (-5,-0.3) node{\tiny$d$};
\draw (0.6,3.5) node{\tiny$c$};
\draw (5.9,3.4) node{\tiny$d$};

\draw (0.9,0.5) node{\tiny$e_1$};
\draw (1.2,1.5) node{\tiny$e_2$};
\draw (1.5,2.5) node{\tiny$e_3$};

\draw (0.1,2.5) node{\tiny$e_1$};
\draw (-0.2,1.5) node{\tiny$e_2$};
\draw (-0.5,0.6) node{\tiny$e_3$};
\end{tikzpicture}
\caption{Translation surface for $\Sigma_{6g-4}.$}
\label{figure8}
\end{center}
\end{figure}

\begin{figure}[htbp]
\begin{center}
\rotatebox{0}{
\begin{tikzpicture}[scale=1]
\draw[orange] (0,2)--(0.5,1.134);
\draw[green] (0.5,1.134)--(1,2);
\draw[] (0.5,1.134)--(-0.5,1.134);
\draw[green] (-0.5,1.134)--(0,2);
\draw[magenta] (-0.5,1.134)--(0,0.268);
\draw[blue] (0,0.268)--(0.5,1.134);
\draw  (0,0.268)--(-1,0.268);
\draw[yellow] (-1,0.268)--(-0.5,1.134);
\draw[blue] (0,2)--(0.5,2.866);
\draw[violet] (0.5,2.866)--(1,2);
\draw[] (0.5,2.866)--(1.5,2.866);
\draw[yellow] (1.5,2.866)--(1,2);
\draw[cyan] (0,2)--(1,2);

\draw (0.4,0.65) node{\tiny$e_1$};
\draw (1,1.5) node{\tiny$e_2$};
\draw (1.5,2.43) node{\tiny$e_3$};
\draw (0.1,2.5) node{\tiny$e_1$};
\draw (-0.4,1.7) node{\tiny$e_2$};
\draw (-1,0.65) node{\tiny$e_3$};

\draw (0.5,2.866)--(1,3.732)--(1.5,2.866);
\draw (1,3.732)--(2,3.732)--(1.5,2.866);
\draw (1.5,2.866)--(2.5,2.866)--(2,3.732);
\draw (2.5,2.866)--(3,3.732)--(2,3.732);
\draw[dashed] (2.5,2.866)--(3.5,2.866);
\draw[dashed] (3,3.732)--(4,3.732);
\draw (4,3.732)--(4.5,2.866)--(5,3.732)--(4,3.732);
\draw (4.5,2.866)--(3.5,2.866)--(4,3.732);

\draw (-1,0.268)--(-0.5,-0.598)--(0,0.268);
\draw (-1,0.268)--(-1.5,-.598)--(-0.5,-.598);
\draw (-1,0.268)--(-2,0.268)--(-1.5,-.598);
\draw (-2,0.268)--(-2.5,-.598)--(-1.5,-.598);
\draw[dashed] (-2,0.268)--(-3,0.268);
\draw[dashed] (-2.5,-.598)--(-3.5,-.598);
\draw (-3.5,-.598)--(-4,.268)--(-4.5,-.598)--(-3.5,-.598);
\draw (-4,0.268)--(-3,0.268)--(-3.5,-.598);

\draw (1.5,3.866) node{\tiny$a_1$};
\draw (2.5,3.866) node{\tiny$a_2$};
\draw (4.5,3.866) node{\tiny$a_{g-2}$};
\draw (-1,-0.8) node{\tiny$a_1$};
\draw (-2,-0.8) node{\tiny$a_2$};
\draw (-4,-0.8) node{\tiny$a_{g-2}$};

\draw (0.65,3.366) node{\tiny$c$};
\draw (-0.1,-0.2) node{\tiny$c$};

\draw (4.8,3.1) node{\tiny$d$};
\draw (-4.4,-0.1) node{\tiny$d$};

\draw (2,2.7) node{\tiny$b_1$};
\draw (4,2.7) node{\tiny$b_{g-3}$};
\draw (-1.5,0.45) node{\tiny$b_1$};
\draw (-3.5,0.45) node{\tiny$b_{g-3}$};
\end{tikzpicture}
}
\caption{Translation surface for $\Sigma_{6g-3}.$}
\label{figure Sigma(6g-3)}
\end{center}
\end{figure}

\item
Let $g$ be an integer that satisfies $\left\lceil \frac{n+3}{6} \right\rceil\le g\le\left\lfloor \frac{n}{2} \right\rfloor$. We aim to show that \( \Sigma_n \) admits cellular-systolic embedding on a translation surface of genus $g$.
To construct such a translation surface, we require \( (n - 2g + 1) \) polygonal faces. This follows from the Euler characteristic formula:
$$\chi = V - E + F = 2 - 2g,$$
where \( V \), \( E \), and \( F \) denote the numbers of vertices, edges, and faces, respectively. Now, \( \Sigma_n \) has \( 1 \) vertex and an $n$ edges to yield the desired genus. From that we find \( F \) = \(n-2g+1\).
We consider two cases based on the parity of \( (n - 2g + 1) \):

Case 1. Assume \( n - 2g + 1 = 2k + 1 \), for some \( k \in \mathbb{N} \). In this case, we construct the translation surface using \( 2k \) equilateral triangles and one regular \( (2n - 6k) \)-gon. These polygons are glued together placing \( k \) equilateral triangles on each of the two opposite ends of the \( (2n - 6k) \)-gon, as shown in Figure~\ref{intermediate genus} $(i)$. This configuration yields a translation surface of genus \( g \) that supports a cellular and systolic embedding of \( \Sigma_n \).

Case 2. Assume \( n - 2g + 1 = 2k \), for some \( k \in \mathbb{N} \). In this case, we use \( 2k - 2 \) equilateral triangles and two regular \( (n - 6k + 6) \)-gons. These polygons are glued to form the desired surface, as illustrated in Figure~\ref{intermediate genus} $(ii)$. Again, the resulting translation surface supports a cellular-systolic embedding of \( \Sigma_n \).

In both cases, the number of polygonal faces is exactly \( n - 2g + 1 \), and the resulting surface has genus \( g \), as required by the Euler characteristic. The embeddings are constructed to satisfy the cellular and systolic conditions. Hence, for every integer \( g \) such that $\left\lceil \frac{n+3}{6} \right\rceil\le g\le\left\lfloor \frac{n}{2} \right\rfloor$, such a translation surface exists.
\end{enumerate}
\end{proof}

\begin{figure}[htbp]
\begin{center}
\rotatebox{0}{
\begin{tikzpicture}[scale=1.2]
\draw[blue] (0,1)--(-0.5,0.866);
\draw[green] (-0.5,0.866)--(-0.866,0.5);
\draw[brown] (-0.866,-0.5)--(-0.5,-0.866);
\draw(-0.5,-0.866)--(0,-1);
\draw[dashed] (-0.866,0.5)--(-0.866,-0.5);
\draw[dashed] (0.866,-0.5)--(0.866,0.5);
\draw[blue] (0,-1)--(0.5,-0.866);
\draw[green] (0.5,-0.866)--(0.866,-0.5);
\draw[brown] (0.866,0.5)--(0.5,0.866);

\draw (-0.3,1.05) node{\tiny$e_1$};
\draw (-0.8,0.8) node{\tiny$e_2$};
\draw (0.3,-1.1) node{\tiny$e_1$};
\draw (0.8,-0.8) node{\tiny$e_2$};
\draw (-0.2,-0.7) node{\tiny\tiny$e_{n-3k-1}$};
\draw (0.2,0.6) node{\tiny\tiny$e_{n-3k-1}$};

\draw[] (0.5,0.866)--(0,1);
\draw[] (0,1)--(0.365,1.365)--(0.5,0.866);
\draw (0.365,1.365)--(0.864,1.231)--(0.5,0.866);
\draw (-0.5,-0.866)--(-0.366,-1.366)--(0,-1);
\draw (-0.5,-0.866)--(-0.865,-1.231)--(-0.366,-1.366);
\draw (0.6,1.5) node{\tiny$a_1$};
\draw (-0.6,-1.5) node {\tiny$a_1$};
\draw[dashed] (0.5,0.866)--(1.498,0.598);
\draw[dashed] (0.864,1.231)--(1.363,1.097);
\draw (1.363,1.097)--(1.498,0.598)--(1.863,0.963)--(1.363,1.097);
\draw (1.7, 1.2) node{\tiny$a_{k-2}$};
\draw (-1.7, -1.2) node{\tiny$a_{k-2}$};
\draw (1.8,0.79) node{\tiny$c$};
\draw (0.05,1.3) node{\tiny$d$};
\draw (-1.8,-0.79) node{\tiny$c$};
\draw (-0.2,-1.3) node{\tiny$d$};
\draw (-0.6,-2) node {$(i)$};
\draw[dashed] (-0.5,-0.866)--(-1.498,-0.598);
\draw[dashed] (-0.864,-1.231)--(-1.363,-1.097);
\draw (-1.363,-1.097)--(-1.498,-0.598)--(-1.863,-0.963)--(-1.363,-1.097);
\draw (4.623,0.781)--(3.788,0.947);

\draw[red] (3.788,0.947)--(3.1,0.434);
\draw[dashed] (3.1,0.434)--(3.099,-0.433);
\draw[blue] (3.099,-0.433)--(3.777,-0.974);
\draw[cyan] (3.777,-0.974)--(4.622,-0.782);
\draw[yellow] (4.622,-0.782)--(4.999,0.001);
\draw[green] (4.999,0.001)--(4.623,0.781);
\draw[red] (5.295,-2.190)--(4.606,-2.703);
\draw (4.606,-2.703)--(3.775,-2.533);
\draw[green]  (3.775,-2.533)--(3.4,-1.753);
\draw[yellow] (3.4,-1.753)--(3.777,-0.974);
\draw[dashed] (5.298,-1.325)--(5.295,-2.190);
\draw[blue] (5.298,-1.325)--(4.622,-0.782);

\draw (3.4,0.85) node{\tiny$e_1$};
\draw (5,-2.6) node{\tiny$e_1$};
\draw (5.3,0.37) node{\tiny$e_{n-6k+4}$};
\draw (3.1,-2.2) node{\tiny$e_{n-6k+4}$};
\draw (5.3,-0.4) node{\tiny$e_{n-6k+3}$};
\draw (3,-1.4) node{\tiny$e_{n-6k+3}$};
\draw (2.9,-0.7) node{\tiny$e_{n-6k+2}$};
\draw (5.65,-1.1) node{\tiny$e_{n-6k+2}$};
\draw (3.778,0.947)--(4.340,1.751)--(4.623,0.781)--(5.171,1.621)--(4.340,1.751);
\draw (3.775,-2.533)--(4.043,-3.467)--(4.606,-2.703);
\draw (3.775,-2.533)--(3.213,-3.337)--(4.043,-3.467);
\draw[dashed] (4.623,0.781)--(6.312,0.449);
\draw[dashed] (5.171,1.621)--(6.015,1.455);
\draw (6.015, 1.455)--(6.860,1.289)--(6.312,0.449)--(6.015,1.455);
\draw[dashed] (3.775,-2.533)--(2.086,-2.201);
\draw[dashed] (3.213,-3.337)--(2.369, -3.171);
\draw (2.369,-3.171)--(2.086,-2.201)--(1.524,-3.005)--(2.369,-3.171);
\draw (6.4375, 1.6) node {\tiny$a_{k-3}$};
\draw (1.9465,-3.3) node {\tiny$a_{k-3}$};
\draw (6.75,0.869) node {\tiny$c$};
\draw (1.64, -2.603) node {\tiny$c$};
\draw (4.01,1.5) node {\tiny$d$};
\draw (4.45,-3.085) node {\tiny$d$};
\draw (4.8,1.9) node {\tiny$a_1$};
\draw (3.6,-3.6) node{\tiny$a_1$};
\draw (3.6,-4) node{$(ii)$};
\end{tikzpicture}
}
\caption{} 
\label{intermediate genus}
\end{center}
\end{figure}


As of now, we have identified that the graphs $\Sigma_n$, for $n \geq 2$, admit cellular-systolic embedding on translation surfaces. A natural question arises as to whether there exist other classes of graphs with more than one vertex that admit a cellular-systolic embedding on some translation surface. We answer this affirmatively and find a rich collection of non-simple graphs. In particular, we identify two special classes as follows:

   \begin{enumerate}
       
    \item $\mathcal{P} = \left\{\, G_{n,m} = (V_n, E_n) \mid n \geq 2,\; m \in \mathbb{N} \right\}$, where $V_n = \{ v_0, v_1, \dots, v_{n-1} \}$ and $ E_n = \{ e_{ij} \mid 0 \leq i \leq n-1,\; 1 \leq j \leq 2m \}$. Note that $e_{ij}$ are the $2m$ parallel edges between~$v_i$ and~$v_{i+1}$ (indices of $v'$s taken modulo $n$). For example, see Figure~\ref{P,Q}$(a)$ for $G_{4,1}$.

\item $\mathcal{Q} = \left\{\, G_{n,m}^\prime = (V_n, E_n) \;\mid \; n \geq 2,\; m \in \mathbb{N}\right\}$, where $V_n = \left\{ v_0, v_1, \dots, v_{n-1}\right \}$ and $E_n = L \cup P$; $L = \{\, \ell_{ij} \mid 0 \leq i \leq n-1,\; 1 \leq j \leq m \}$ and  
$P = \{\, e_{ij} \mid 0 \leq i \leq n-1,\; 1 \leq j \leq m \}$,  such that each $\ell_{ij}$ is a loop at vertex $v_i$ and each $e_{ij}$ connects vertex $v_i$ to vertex $v_{i+1}$ ( indices taken modulo $n$). That is, every vertex $v_i$ has exactly $m$ loops, and every pair  $(v_i, v_{i+1})$ is connected by $m$ parallel edges. In Figure~\ref{P,Q}$(b)$, $G_{4,2}^\prime$ is depicted.

\end{enumerate} 

\begin{figure}[htbp]
\begin{center}
\begin{tikzpicture}[scale=1]
\draw (0,0)--(2,0)--(2,1)--(0,1)--(0,0);
\draw (0,0)..controls(1,-0.3)..(2,0);
\draw (2,0)..controls(1.7,0.5)..(2,1);
\draw (2,1)..controls(1,1.3)..(0,1);
\draw (0,1)..controls(0.3,0.5)..(0,0);

\draw (0,0) node{\tiny$\bullet$};
\draw[red] (2,0) node{\tiny$\bullet$};
\draw[blue] (2,1) node{\tiny$\bullet$};
\draw [green] (0,1) node{\tiny$\bullet$};

\draw (4,0)--(6,0)--(6,1)--(4,1)--(4,0);
\draw (4,0)..controls(5,-0.3)..(6,0);
\draw (6,0)..controls(5.7,0.5)..(6,1);
\draw (6,1)..controls(5,1.3)..(4,1);
\draw (4,1)..controls(4.3,0.5)..(4,0);

\draw (4,0) node{\tiny$\bullet$};
\draw[red] (6,0) node{\tiny$\bullet$};
\draw[blue] (6,1) node{\tiny$\bullet$};
\draw [green] (4,1) node{\tiny$\bullet$};

\draw[] (4,0) arc
[
start angle=90,
end angle=450,
x radius=0.2,
y radius=0.4,
];

\draw[] (4,0) arc
[
start angle=89,
end angle=450,
x radius=0.3,
y radius=0.5,
];

\draw[] (6,0) arc
[
start angle=90,
end angle=450,
x radius=0.2,
y radius=0.4,
];

\draw[] (6,0) arc
[
start angle=90,
end angle=450,
x radius=0.3,
y radius=0.5,
];

\draw[] (6,1) arc
[
start angle=270,
end angle=630,
x radius=0.2,
y radius=0.4,
];

\draw (6,1) arc
[
start angle=270,
end angle=630,
x radius=0.3,
y radius=0.5,
];

\draw (4,1) arc
[
start angle=270,
end angle=630,
x radius=0.2,
y radius=0.4,
];

\draw (4,1) arc
[
start angle=270,
end angle=630,
x radius=0.3,
y radius=0.5,
];

\draw (0.8, -1) node {$(a)$}; 
\draw (5, -1) node {$(b)$}; 
\end{tikzpicture}
\caption{$(a)~G_{4,1}$ and $(b)~G_{4,2}^\prime.$}
\label{P,Q}
\end{center}
\end{figure}

   These graphs are nothing but $4m$-regular multigraphs. In the following result, we answer the above question.

\begin{theorem}
Let $G\in \mathcal{P} \sqcup \mathcal{Q}.$ Then $G$ admits a cellular-systolic embedding on some translation surface.   
\end{theorem}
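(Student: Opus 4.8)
The plan is to prove the theorem in the same explicit, constructive spirit as the treatment of $\Sigma_n$ in Theorem~\ref{cellular embedding}: for each graph in $\mathcal{P}\sqcup\mathcal{Q}$ I would build, by hand, a translation surface $X$ whose systolic graph $\Gamma_X$ is isomorphic to it. The key structural observation is that both families are $4m$-regular multigraphs on $n$ cyclically arranged vertices with exactly $2mn$ edges, so the natural building blocks are $n$ congruent regular $4m$-gons $B_0,B_1,\dots,B_{n-1}$ of a common side length $s$, arranged in a cyclic chain and glued by translations. I would fix this common side length so that, after all identifications, every side becomes a saddle connection of length $s$. Cellularity is then automatic: the only complementary regions of the $1$-skeleton are the open polygon interiors, each of which is a disk, so it remains only to pin down the vertex identifications and verify the systolic condition.

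For the family $\mathcal{P}$ I would devote $2m$ sides of each block $B_i$ to gluings with $B_{i+1}$ and the remaining $2m$ sides to gluings with $B_{i-1}$ (indices taken modulo $n$), choosing the pairing of parallel, oppositely oriented sides so that the $2m$ glued pairs between $B_i$ and $B_{i+1}$ become the $2m$ parallel edges $e_{ij}$ joining $V_i$ and $V_{i+1}$. For the family $\mathcal{Q}$ I would instead self-identify $2m$ sides of each $B_i$ in $m$ parallel, oppositely oriented pairs --- exactly as in the wedge construction of Theorem~\ref{cellular embedding} --- producing the $m$ loops $\ell_{ij}$ at $V_i$, and glue the remaining $m+m$ sides to $B_{i+1}$ and $B_{i-1}$ to produce the $m$ bond-edges $e_{ij}$ to each neighbour. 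In both cases the bookkeeping matches: the $n\cdot 4m=4mn$ polygon sides are identified in $2mn$ pairs, one pair per edge of $G$, and since the gluings are by translations the resulting object is automatically a translation surface in the sense of Definition~\ref{Translation Surface}.

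The main obstacle is the control of the vertex identifications. The gluings must be arranged so that the corners of the $B_i$ fall into \emph{exactly} $n$ equivalence classes $V_0,\dots,V_{n-1}$ realizing the prescribed cyclic adjacency: an edge obtained by gluing a side of $B_i$ to a side of $B_{i+1}$ must genuinely run from $V_i$ to $V_{i+1}$, and a self-glued pair must genuinely form a loop at $V_i$. I would handle this by encoding the side-pairing as an explicit permutation on the oriented sides (as is done with the permutations $\sigma'_l$ in the proof of Theorem~\ref{main theorem 1}), tracking the induced permutation on the corners, and verifying by a direct cycle count that it has exactly $n$ cycles of the required type. The parity of $m$ and the smallest blocks --- in particular $m=1$, where $B_i$ degenerates to a square --- would be recorded separately, just as the small cases of $\Sigma_n$ were handled individually.

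Finally I would confirm the systolic condition, namely that the $2mn$ identified sides are precisely the shortest saddle connections. Since each $B_i$ is a regular $4m$-gon, within a single block the shortest segment joining two corners is a side of length $s$, every diagonal being strictly longer; I would then rule out short saddle connections crossing from one block into a glued neighbour by using that a regular $4m$-gon is wide relative to its side, so any straight geodesic between singular points that meets a polygon interior has length exceeding $s$. Consequently the systole equals $s$ and the systolic connections are exactly the identified sides, giving $\Gamma_X\cong G$; combined with the automatic cellularity this yields a cellular-systolic embedding and completes the proof for every $G\in\mathcal{P}\sqcup\mathcal{Q}$.
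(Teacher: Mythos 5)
Your construction is essentially identical to the paper's: both take $n$ congruent regular $4m$-gons arranged in a cyclic chain, glue $2m$ sides of each block to the next block for the family $\mathcal{P}$, and for $\mathcal{Q}$ self-identify $m$ parallel pairs within each block (producing the loops) while gluing the remaining sides to the neighbouring blocks, with cellularity coming from the polygon interiors and the systolic condition from all sides having the common length $s$. Your added care about counting the corner equivalence classes and ruling out short saddle connections crossing block boundaries only fills in details the paper leaves implicit, so the approach is the same.
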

\begin{proof}
We explicitly construct the corresponding translation surface for $G\in\mathcal{P} \sqcup \mathcal{Q}$.
Now, there are two cases here.

Case 1.
Assume $G\in \mathcal{P}$. So, $G=G_{n,m}$ for some $n\ge 2, m\in \N$. Then, the vertices of $G$ are labeled as $v_0, v_1, v_2, \dots, v_{n-1}$ such that there are $2m$ parallel edges joining the pair of vertices ($v_i$, $v_{i+1}$) mod $n$, $i= 0,1,2, \dots, n-1$. Consider the permutation $\sigma$ = ($0$ $1$ $2$ \dots $n-1$) in $S_n$. Now, we form our desired translation surface as follows:

First, we take $n$ numbers of regular $4m$ gons denoted by $P_0$, $P_1$, $P_2$, \dots, $P_{n-1}$. Then we label the sides of $P_i$ as $E_1(i), E_2(i), \dots, E_{2m}(i), \bar{E_1}(i), \bar{E_2}(i), \dots \bar{E}_{2m}(i)$ such that $E_k(i)$ is parallel to $\bar{E_k}(i)$, $1\leq k\leq 2m$ for each $i\in \{0,1,2 \dots, n-1\}$. Now for a fixed $i\in\{0, 1, 2, \dots, n-1\}$ we glue the sides $E_k(i)$ and $\bar{E_k}(\sigma(i))$, $1\leq k\leq 2m$  by translation. 
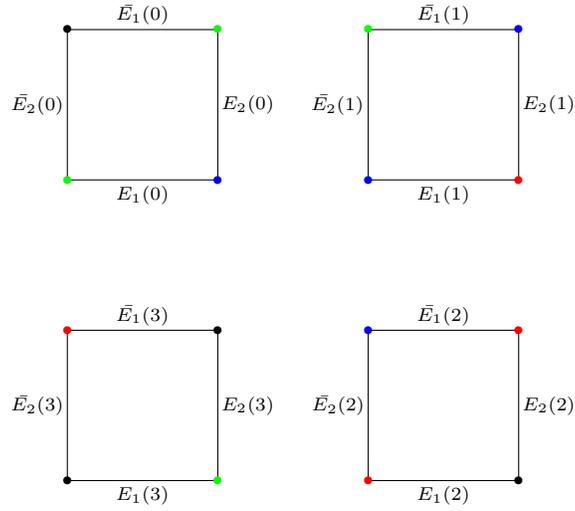
\begin{figure}[htbp]
\begin{center}
\begin{tikzpicture}[scale=1]

\draw (-3,-4)--(-1,-4)--(-1,-2)--(-3,-2)--(-3,-4);
\draw (3,-4)--(1,-4)--(1,-2)--(3,-2)--(3,-4);

\draw (-3,-8)--(-1,-8)--(-1,-6)--(-3,-6)--(-3,-8);
\draw (3,-8)--(1,-8)--(1,-6)--(3,-6)--(3,-8);

\draw (-2,-4.2) node{\tiny$E_1(0)$};
\draw (-0.6, -3 ) node{\tiny$E_2(0)$};
\draw (-2,-1.8) node{\tiny$\bar{E_1}(0)$};
\draw (-3.4,-3) node{\tiny$\bar{E_2}(0)$};

\draw (2,-1.8) node{\tiny$\bar{E_1}(1)$};
\draw (0.59,-3) node{\tiny$\bar{E_2}(1)$};
\draw (-2,-8.2) node{\tiny$E_1(3)$};
\draw (-0.6,-7) node{\tiny$E_2(3)$};

\draw (2,-4.2) node{\tiny$E_1(1)$};
\draw (3.4,-3) node{\tiny$E_2(1)$};
\draw (2,-5.8) node{\tiny$\bar{E_1}(2)$};
\draw (0.6,-7) node{\tiny$\bar{E_2}(2)$};
\draw (2,-8.2) node{\tiny$E_1(2)$};
\draw (3.4,-7) node{\tiny$E_2(2)$};
\draw (-2,-5.8) node{\tiny$\bar{E_1}(3)$};
\draw (-3.4,-7) node{\tiny$\bar{E_2}(3)$};

\draw[green] (-3,-4) node{\tiny$\bullet$};
\draw[green] (-1,-2) node{\tiny$\bullet$};
\draw[green] (1,-2) node{\tiny$\bullet$};
\draw[green] (-1,-8) node{\tiny$\bullet$};

\draw[blue] (-1,-4) node{\tiny$\bullet$};
\draw[blue] (1,-4) node{\tiny$\bullet$};
\draw[blue] (3,-2) node{\tiny$\bullet$};
\draw[blue] (1,-6) node{\tiny$\bullet$};

\draw (-3,-2) node{\tiny$\bullet$};
\draw (-3,-8) node{\tiny$\bullet$};
\draw (-1,-6) node{\tiny$\bullet$};
\draw (3,-8) node{\tiny$\bullet$};

\draw[red] (-3,-6) node{\tiny$\bullet$};
\draw[red] (1,-8) node{\tiny$\bullet$};
\draw[red] (3,-6) node{\tiny$\bullet$};
\draw[red] (3,-4) node{\tiny$\bullet$};

\end{tikzpicture}
\caption{Translation surface for $G_{4,1}$.}
\label{figure16}
\end{center}
\end{figure}

In this way we get a translation surface $S$ with $n$ singular points (or marked points) and each sides of $P_i$'s becomes a systolic connection of $S$. So, it is clear that $\Gamma_S \cong G$. Also by the construction $S\setminus G$ is nothing but union of the $n$ number of regular $4m$ gons i.e. topological disks and hence $G$ admits cellular-systolic embedding on $S$.
 For example, Figure~\ref{figure16} illustrates the construction of a translation surface for $G_{4,1}$.

Case 2.
Assume $G\in \mathcal{Q}$.
This implies that $G=G_{n,m}^\prime$ for some $n\ge2, m\in \N$.
 So, label the vertices of $G$ as $v_0$, $v_1$, $v_2$, \dots, $v_{n-1}$ such that,
 (1) each vertex $v_i$, $i= 0,1,\dots, n-1$ has $m$ loops and  
 (2) there are $m$ parallel edges joining the pair of vertices ($v_i$, $v_{i+1}$) mod $n$, $0\le i\le n-1$.
 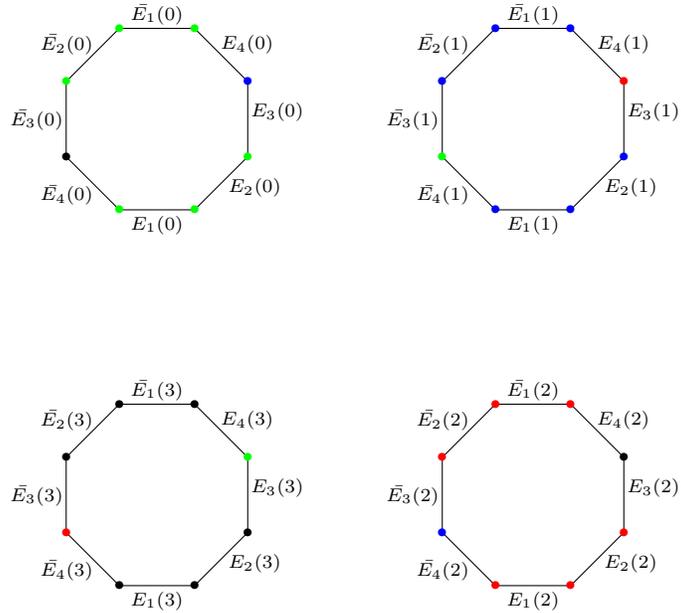
\begin{figure}[htbp]
\begin{center}
\begin{tikzpicture}[scale=1]
\draw (0,-5)--(1,-5);
\draw (1,-5)--(1.707,-4.293);
\draw (1.707,-4.293)--(1.707,-3.293);
\draw  (1.707,-3.293)--(1,-2.586);
\draw (0, -2.586)--(1,-2.586);
\draw (0,-2.586)--(-0.707,-3.293);
\draw (-0.707,-3.293)--(-0.707,-4.293);
\draw (-0.707,-4.293)--(0,-5);

\draw (0.5,-5.2) node{\tiny$E_1(0)$};
\draw (1.8,-4.7) node{\tiny$E_2(0)$};
\draw (2.1,-3.7) node{\tiny$E_3(0)$};
\draw (1.7,-2.8) node{\tiny$E_4(0)$};
\draw (0.5,-2.4) node{\tiny$\bar{E_1}(0)$};
\draw (-0.7,-2.8) node{\tiny$\bar{E_2}(0)$};
\draw (-1.1,-3.8) node{\tiny$\bar{E_3}(0)$};
\draw (-0.7,-4.8) node{\tiny$\bar{E_4}(0)$};

\draw[green] (0,-5) node{\tiny$\bullet$};
\draw[green] (1,-5) node{\tiny$\bullet$};
\draw[green] (1.707,-4.293) node{\tiny$\bullet$};
\draw[blue] (1.707,-3.293) node{\tiny$\bullet$};
\draw[green] (1,-2.586) node{\tiny$\bullet$};
\draw[green] (0,-2.586) node{\tiny$\bullet$};
\draw[green] (-0.707,-3.293) node{\tiny$\bullet$};
\draw (-0.707,-4.293) node{\tiny$\bullet$};

\draw (5,-5)--(6,-5);
\draw (6,-5)--(6.707,-4.293);
\draw (6.707,-4.293)--(6.707,-3.293);
\draw  (6.707,-3.293)--(6,-2.586);
\draw (5, -2.586)--(6,-2.586);
\draw (5,-2.586)--(4.293,-3.293);
\draw (4.293,-3.293)--(4.293,-4.293);
\draw (4.293,-4.293)--(5,-5);

\draw (5.5,-5.2) node{\tiny$E_1(1)$};
\draw (6.8,-4.7) node{\tiny$E_2(1)$};
\draw (7.1,-3.7) node{\tiny$E_3(1)$};
\draw (6.7,-2.8) node{\tiny$E_4(1)$};
\draw (5.5,-2.4) node{\tiny$\bar{E_1}(1)$};
\draw (4.3,-2.8) node{\tiny$\bar{E_2}(1)$};
\draw (3.9,-3.8) node{\tiny$\bar{E_3}(1)$};
\draw (4.3,-4.8) node{\tiny$\bar{E_4}(1)$};

\draw[blue] (5,-5) node{\tiny$\bullet$};
\draw[blue] (6,-5) node{\tiny$\bullet$};
\draw[blue] (6.707,-4.293) node{\tiny$\bullet$};
\draw[red] (6.707,-3.293) node{\tiny$\bullet$};
\draw[blue] (6,-2.586) node{\tiny$\bullet$};
\draw[blue] (5,-2.586) node{\tiny$\bullet$};
\draw[blue] (4.293,-3.293) node{\tiny$\bullet$};
\draw[green] (4.293,-4.293) node{\tiny$\bullet$};

\draw (5,-10)--(6,-10);
\draw (6,-10)--(6.707,-9.293);
\draw (6.707,-9.293)--(6.707,-8.293);
\draw  (6.707,-8.293)--(6,-7.586);
\draw (5, -7.586)--(6,-7.586);
\draw (5,-7.586)--(4.293,-8.293);
\draw (4.293,-8.293)--(4.293,-9.293);
\draw (4.293,-9.293)--(5,-10);

\draw (5.5,-10.2) node{\tiny$E_1(2)$};
\draw (6.8,-9.7) node{\tiny$E_2(2)$};
\draw (7.1,-8.7) node{\tiny$E_3(2)$};
\draw (6.7,-7.8) node{\tiny$E_4(2)$};
\draw (5.5,-7.4) node{\tiny$\bar{E_1}(2)$};
\draw (4.3,-7.8) node{\tiny$\bar{E_2}(2)$};
\draw (3.9,-8.8) node{\tiny$\bar{E_3}(2)$};
\draw (4.3,-9.8) node{\tiny$\bar{E_4}(2)$};

\draw[red] (5,-10) node{\tiny$\bullet$};
\draw[red] (6,-10) node{\tiny$\bullet$};
\draw[red] (6.707,-9.293) node{\tiny$\bullet$};
\draw (6.707,-8.293) node{\tiny$\bullet$};
\draw[red] (6,-7.586) node{\tiny$\bullet$};
\draw[red] (5,-7.586) node{\tiny$\bullet$};
\draw[red] (4.293,-8.293) node{\tiny$\bullet$};
\draw[blue] (4.293,-9.293) node{\tiny$\bullet$};

\draw (0,-10)--(1,-10);
\draw (1,-10)--(1.707,-9.293);
\draw (1.707,-9.293)--(1.707,-8.293);
\draw  (1.707,-8.293)--(1,-7.586);
\draw (0, -7.586)--(1,-7.586);
\draw (0,-7.586)--(-0.707,-8.293);
\draw (-0.707,-8.293)--(-0.707,-9.293);
\draw (-0.707,-9.293)--(0,-10);

\draw (0.5,-10.2) node{\tiny$E_1(3)$};
\draw (1.8,-9.7) node{\tiny$E_2(3)$};
\draw (2.1,-8.7) node{\tiny$E_3(3)$};
\draw (1.7,-7.8) node{\tiny$E_4(3)$};
\draw (0.5,-7.4) node{\tiny$\bar{E_1}(3)$};
\draw (-0.7,-7.8) node{\tiny$\bar{E_2}(3)$};
\draw (-1.1,-8.8) node{\tiny$\bar{E_3}(3)$};
\draw (-0.7,-9.8) node{\tiny$\bar{E_4}(3)$};

\draw (0,-10) node{\tiny$\bullet$};
\draw (1,-10) node{\tiny$\bullet$};
\draw (1.707,-9.293) node{\tiny$\bullet$};
\draw[green] (1.707,-8.293) node{\tiny$\bullet$};
\draw (1,-7.586) node{\tiny$\bullet$};
\draw (0,-7.586) node{\tiny$\bullet$};
\draw (-0.707,-8.293) node{\tiny$\bullet$};
\draw[red] (-0.707,-9.293) node{\tiny$\bullet$};

\end{tikzpicture}
\caption{Translation surface for $G_{4,2}^\prime$.}
\label{figure17}
\end{center}
\end{figure}
Also, in this case we  consider the permutation $\sigma$ = ($0$ $1$ $2$ \dots $n-1$) from $S_n$. Similarly to case $1$, we take $n$ numbers of regular $4m$ gons denoted by $P_0$, $P_1$, $P_2$, \dots, $P_{n-1}$. Label the sides of $P_i$ as $E_1(i), E_2(i), \dots, E_{2m}(i), \bar{E_1}(i), \bar{E_2}(i), 
\dots \bar{E}_{2m}(i)$ such that $E_k(i)$ is parallel to $\bar{E_k}(i)$, $1\leq k\leq m$ for each $i\in \{0,1,2 \dots, n-1\}$. Here, we form the translation surface as follows:

For a fixed $i\in \{0,1,2 \dots, n-1\}$ we identify the sides $E_k(i)$ and $\bar{E}_k(i)$  for $1\le k\le m$ and the sides $E_k(i)$ and $\bar{E}_k(\sigma(i))$ for $m+1\le k\le 2m$ via translation. So, we get a translation surface $S$ from this with $n$ number of singular points (or marked points) and each side of $P_i$'s is systolic connection of S. Therefore, $G$ admits cellular-systolic embedding on $S$. In Figure \ref{figure17}, a translation surface is explicitly constructed for $G_{4,2}^\prime$. 

The genus of the translation surface for both the cases is $\left[n(m-1)+1\right]$.

Therefore, from both the cases we conclude that any $G\in \mathcal{P} \sqcup \mathcal{Q}$ admits cellular-systolic embedding on a translation surface.

\end{proof}

\bibliographystyle{alpha}
\bibliography{bibliography}
\end{document}